\newcommand{\vcxymatrix}[1]{\vcenter{\xymatrix{#1}}}
\newcounter{proof}
{\stepcounter{proof}\begin{proof}}%
{\end{proof}}%
\newcounter{proofstep}[proof]
\newcounter{proofcase}[proof]
{\refstepcounter{proofcase}\bigskip\par\noindent%
  \ifthenelse{\isempty{#1}}
    {\textbf{Case \theproofcase.}}
    {\textbf{#1.}}
  \noindent}%
{\par}%
\theoremstyle{plain}
\newtheorem{thm}{Theorem}[section]
\newtheorem*{thm*}{Theorem}
\newtheorem{lem}[thm]{Lemma}
\theoremstyle{definition}
\theoremstyle{remark}
\newtheorem{rem}[thm]{Remark}
\numberwithin{equation}{section}
\newcommandx{\textref}[2][1=]{(\hyperref[#2]{#1\ref*{#2}})}
\DeclareMathOperator*{\Union}{\bigcup}
\newcommand{\charfun}{\scalebox{1.0}{\ensuremath{\mathbbm 1}}}
\newcommand{\dint}{\mathscr D}
\newcommand{\dif}{\ensuremath{\, \mathrm d}}
\DeclareMathOperator{\Id}{Id}
\DeclareMathOperator{\spn}{span}
\DeclareMathOperator{\cond}{\mathbb E}
\DeclareMathOperator{\sqfun}{\mathbb S}
\newcommand{\diindex}{\ensuremath{\mathcal O}}
\newcommand{\bmo}{\ensuremath{\mathrm{BMO}}}
\begin{document}

\title[Factorization in $SL^\infty$]{Factorization in \bm{$SL^\infty$}}

\author[R.~Lechner]{Richard Lechner}
\address{Richard Lechner,
  Institute of Analysis,
  Johannes Kepler University Linz,
  Altenberger Strasse 69,
  A-4040 Linz, Austria}
\email{Richard.Lechner@jku.at}

\date{\today}
\subjclass[2010]{46B25,
  60G46,
  46B26,
}
\keywords{Factorization, bounded linear operator, classical Banach space, unconditional basis,
  $SL^\infty$, combinatorics of colored dyadic rectangles, quasi-diagonalization, projection}
\thanks{Supported by the Austrian Science Foundation (FWF) Pr.Nr. P28352}

\begin{abstract}
  We show that the non-separable Banach space $SL^\infty$ is primary.
  This is achieved by directly solving the infinite dimensional factorization problem in
  $SL^\infty$.
  In particular, we bypass Bourgain's localization method.
\end{abstract}

\maketitle

\section{Introduction}\label{sec:intro}

\noindent
Let $\dint$ denote the collection of dyadic subintervals of the unit interval~$[0,1)$, and let~$h_I$
denote the $L^\infty$-normalized Haar function supported on $I\in\dint$;
that is, $h_I$ is $+1$ on the left half of $I$, $h_I$ is $-1$ on the right half of $I$, and zero
otherwise.

The \emph{non-separable} Banach space $SL^\infty$ is the linear space
\begin{equation}\label{eq:sl-infty:space}
  \{f = \sum_{I\in\dint} a_I h_I : \|f\|_{SL^\infty} < \infty\},
\end{equation}
equipped with the norm
\begin{equation}\label{eq:sl-infty:norm}
  \big\| \sum_{I\in\dint} a_I h_I \big\|_{SL^\infty}
  = \| \big(\sum_{I\in\dint} a_I^2 h_I^2\big)^{1/2} \|_{L^\infty}.
\end{equation}
We want to emphasize that throughout this paper, whenever we encounter infinite sums in the Banach
space $SL^\infty$, we treat these series as a formal series representing the vector of coefficients,
and we do \emph{not} imply any kind of convergence.
The \emph{Hardy space} $H^1$ is the completion of
\begin{equation*}
  \spn\{ h_I : I \in \dint \}
\end{equation*}
under the norm
\begin{equation}\label{eq:Hp-norm}
  \|f\|_{H^1}
  = \int_0^1 \big(
    \sum_{I\in \dint} |a_I|^2 h_I^2(x)
  \big)^{1/2}
  \dif x
  ,
\end{equation}
where $f = \sum_{I\in \dint} a_I h_I$.
We note the well-known and obvious inequality (see e.g.~\cite{garsia:1973}):
\begin{equation}\label{eq:bracket-estimate}
  |\langle f, g\rangle| \leq \|f\|_{SL^\infty} \|g\|_{H^1},
  \qquad f\in SL^\infty,\ g\in H^1.
\end{equation}
Let $T$ denote a bounded, linear operator on $SL^\infty$.
We say an operator $T$ has \emph{large diagonal} with respect to the Haar system
$(h_I : I\in \dint)$ if there exists a $\delta > 0$ such that
\begin{equation*}
  |\langle T h_I, h_I \rangle|
  \geq \delta |I|,
  \qquad I\in \dint.
\end{equation*}

\section{Main Results}\label{sec:results}

\noindent
The first result Theorem~\ref{thm:factor} asserts that the identity operator on $SL^\infty$ factors
through any operator on $SL^\infty$ that has large diagonal with respect to the Haar system.

\begin{thm}\label{thm:factor}
  Let $\delta, \eta > 0$, and let $T : SL^\infty\to SL^\infty$ be an operator satisfying
  \begin{equation*}
    |\langle T h_I, h_I \rangle| \geq \delta |I|,
    \qquad I\in \dint.
  \end{equation*}
  Then the identity operator $\Id$ on $SL^\infty$ factors through~$T$, that is, there are
  operators $R, S : SL^\infty\to SL^\infty$ such that the diagram
  \begin{equation}\label{eq:factor}
    \vcxymatrix{SL^\infty \ar[r]^\Id \ar[d]_R & SL^\infty\\
      SL^\infty \ar[r]_T & SL^\infty \ar[u]_S}
  \end{equation}
  is commutative.
  Moreover, the operators $R$ and $S$ can be chosen with $\|R\|\|S\| \leq (1+\eta)/\delta$.
\end{thm}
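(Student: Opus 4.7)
The plan is to construct a \emph{faithful Haar system} $(b_I)_{I\in\dint}$ in $SL^\infty$ together with a compatible family of \emph{representatives} $K_I\in\dint$, on which the operator $T$ acts quasi-diagonally with entries $\tilde{d}_I:=\langle Th_{K_I},h_{K_I}\rangle/|K_I|$. Note that $|\tilde{d}_I|\ge\delta$ for every $I$, since the large-diagonal hypothesis applies uniformly to all dyadic intervals. The factorization $\Id=STR$ is then realized by the isometric embedding $R:h_I\mapsto b_I$ together with the operator $S$ built from a bounded left-inverse $Q$ of $R$, composed with the inverse of the diagonal $D:h_I\mapsto\tilde{d}_I h_I$.

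Concretely, I take $b_I=\sum_{K\in\mathcal{A}_I}\varepsilon_K h_K$, where $\mathcal{A}_I\subset\dint$ is a collection of pairwise disjoint dyadic intervals partitioning a \emph{carrier} $J(I)\in\dint$ and $\varepsilon_K\in\{\pm 1\}$. I impose three structural conditions: (i) the carriers $I\mapsto J(I)$ form a tree-isomorphic embedding of $\dint$; (ii) the families $\mathcal{A}_I$ are pairwise disjoint across distinct $I$; (iii) the representatives $K_I\in\mathcal{A}_I$ satisfy the \emph{chain property} $I\supsetneq I'\Rightarrow K_I\supsetneq K_{I'}$. Property (i) together with the identity $\sum_{K\in\mathcal{A}_I}h_K^2=\charfun_{J(I)}$ gives $\|b_I\|_{SL^\infty}=1$, $\|b_I\|_{H^1}=|J(I)|$, and makes $R:h_I\mapsto b_I$ an isometric embedding of $SL^\infty$ into itself, via tree-iso preservation of the multiplicities of the square function.

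The combinatorial heart of the proof is the inductive construction of $(\mathcal{A}_I,\varepsilon_K,K_I)$ satisfying (i)--(iii) together with the \emph{quasi-diagonalization}
\[
\bigl|\langle Tb_I,h_{K_J}\rangle - \varepsilon_{K_I}\,\tilde{d}_I\,\lvert K_I\rvert\,\delta_{IJ}\bigr| \;\le\; \eta'\,\lvert K_J\rvert, \qquad I,J\in\dint,
\]
where $\eta'>0$ is a small tolerance to be fixed at the end. Enumerate $\dint$ in a tree-compatible order $I_1,I_2,\ldots$ (all ancestors of $I_n$ appearing before $I_n$) and inductively select, at step $n$, a carrier $J(I_n)$ placed deep inside the tree slot dictated by the previously chosen carriers, a fine partition $\mathcal{A}_{I_n}$ of $J(I_n)$, signs $\varepsilon_K$, and a representative $K_{I_n}$ lying inside the subinterval of $J(I_n)$ that contains the representatives of the ancestors of $I_n$ (thereby enforcing (iii)). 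The stopping-time criterion drives the Haar coefficients of the finitely many relevant operators $Tb_{I_m}$ ($m<n$) and $Th_{I_n}$ at the newly introduced intervals below a rapidly decreasing tolerance $\eta_n$ with $\sum_n\eta_n=\eta'$; this is feasible because $T$ is bounded on $SL^\infty$, and \eqref{eq:bracket-estimate} bounds the Haar coefficients in question by $\|T\|$ uniformly, so a nested pigeonhole at successively finer scales delivers the required smallness.

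Define the left-inverse $Qf=\sum_I\varepsilon_{K_I}\langle f,h_{K_I}\rangle/\lvert K_I\rvert\cdot h_I$. The chain property (iii) implies that for every $y\in[0,1)$ the collection $(K_I)_{I\ni y}$ is a nested chain of dyadic intervals; picking any $y^*\in\bigcap K_I$ yields $\sum_{I\ni y}(\langle f,h_{K_I}\rangle/\lvert K_I\rvert)^2\le\sum_{K\ni y^*}(\langle f,h_K\rangle/|K|)^2\le\|f\|_{SL^\infty}^2$, whence $\|Q\|\le 1$; and a direct computation using the bi-orthogonality $\langle b_I,h_{K_J}\rangle=\varepsilon_{K_J}|K_J|\delta_{IJ}$ gives $QR=\Id$. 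Setting $S_0:=D^{-1}Q$ one has $\|S_0\|\le\delta^{-1}$, and the quasi-diagonalization produces $S_0TR=\Id+E$ with $\|E\|\le\eta'/\delta$. Choosing $\eta'$ so that $\eta'/\delta\le\eta/(1+\eta)$, the Neumann inverse $S:=(\Id+E)^{-1}S_0$ satisfies $STR=\Id$ and $\|R\|\|S\|\le(1+\eta)/\delta$. The principal obstacle is the inductive construction in paragraph three: reconciling the three structural conditions (i)--(iii) with the quantitative quasi-diagonal bound is where the "combinatorics of colored dyadic rectangles" signaled in the keywords is deployed, combining signed-Haar cancellation with a careful nested pigeonhole on refined dyadic families.
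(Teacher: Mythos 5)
Your overall architecture (a Gamlen--Gaudet type block basis $b_I$ supported on disjoint carriers, a bounded left inverse $Q$ built from a nested family of representatives, division by the diagonal, Neumann series) parallels the paper's, and your estimate $\|Q\|\le 1$ via the chain property is sound. The gap is in the step ``$S_0TR=\Id+E$ with $\|E\|\le\eta'/\delta$''. Your quasi-diagonalization is an \emph{entrywise} statement about the countably many vectors $Tb_I$, and to convert it into an operator-norm bound on $E$ you must write, for $f=\sum_I a_Ih_I$,
\begin{equation*}
  \langle T R f, h_{K_J}\rangle
  = \sum_{I} a_I \,\langle T b_I, h_{K_J}\rangle .
\end{equation*}
This interchange of $T$ with the infinite sum is not available in $SL^\infty$: the space is non-separable, the formal series $\sum_I a_I b_I$ does \emph{not} converge in norm to $Rf$, and a bounded operator $T$ is therefore not determined on $Rf$ by its values on the individual $b_I$. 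Your ``nested pigeonhole at successively finer scales'' controls each coefficient $\langle Tb_{I_n},h_{K_{I_m}}\rangle$ ($m<n$) at the moment $b_{I_n}$ is built, but no accumulation of such entrywise bounds controls $\langle T(\sum_{I}a_Ib_I),h_{K_J}\rangle$ for genuinely infinite tails. This is exactly the obstruction the paper isolates in the remark following the proof of Theorem~\ref{thm:factor}.

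The paper's way around it is Lemma~\ref{lem:sieve}: for a fixed functional $g\in H^1$ one passes to an infinite set $\Lambda$ of Haar frequencies such that $\sup_{\|f\|_{SL^\infty}\le1}|\langle TP_\Lambda f,g\rangle|\le\eta\|g\|_{H^1}$, a bound uniform over the whole unit ball rather than over finitely many test vectors. In the inductive construction all future blocks $b_j$, $j>i$, are placed on frequencies from the nested sets $\Lambda_{i+1}$, so the entire tail subspace $\{\sum_{j>i}a_jb_j\}$ lies in the range of $P_{\Lambda_{i+1}}$ and estimate~\eqref{eq:induction-properties:e} applies to it directly; this is what makes the passage from~\eqref{eq:crucial-identity} to~\eqref{eq:crucial-inequality} legitimate. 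You need this (or an equivalent uniform-on-the-tail device); without it your argument proves the factorization only on the non-dense subspace of $f$ with finitely supported Haar expansion. A secondary point: a normalized Haar coefficient $\langle F,h_K\rangle/|K|$ of a fixed $F\in SL^\infty$ need not tend to $0$ as $K$ shrinks (only its squares along a single branch are summable), so your selection of the single representatives $K_J$ must be confined to the branch forced by the chain property for the decay $|\langle Tb_{I_m},h_{K_J}\rangle|\le\eta_n|K_J|$ to be achievable; the paper avoids this delicacy by testing against the full Rademacher-type blocks $b_i$ and invoking Lemma~\ref{lem:bracket-convergence}.
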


Let us now recall the notion of a primary Banach space, see e.g.~\cite{lindenstrauss-tzafriri:1977}:
A Banach space $X$ is \emph{primary} if for every bounded projection $Q : X\to X$, either $Q(X)$ or
$(\Id - Q)(X)$ is isomorphic to $X$.

The subsequent factorization result Theorem~\ref{thm:primary} follows from Theorem~\ref{thm:factor} by means of
well established combinatorics of dyadic intervals, see e.g.~\cite{mueller:2005}.
Note that Theorem~\ref{thm:factor} is a theorem about an operator and a basis, whereas
Theorem~\ref{thm:primary} expresses an isomorphic invariant.
\begin{thm}\label{thm:primary}
  Let $T : SL^\infty\to SL^\infty$ be a bounded linear operator and $\eta > 0$.
  Then the identity operator $\Id$ on $SL^\infty$ factors through~$H=T$ or $H=\Id-T$, i.e., there
  exist operators $R, S : SL^\infty\to SL^\infty$ such that the diagram
  \begin{equation}\label{eq:primary}
    \vcxymatrix{SL^\infty \ar[r]^\Id \ar[d]_R & SL^\infty\\
      SL^\infty \ar[r]_H & SL^\infty \ar[u]_S}
  \end{equation}
  is commutative.
  Moreover, the operators $R$ and $S$ can be chosen with $\|R\| \|S\| \leq 2+\eta$.
  Consequently, the Banach space $SL^\infty$ is primary.
\end{thm}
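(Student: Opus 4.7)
The plan is to reduce Theorem~\ref{thm:primary} to Theorem~\ref{thm:factor} via a two-coloring of $\dint$ and extraction of a faithful copy of the Haar system. Since $\langle h_I, h_I\rangle = |I|$ and $T + (\Id-T) = \Id$, for every $I\in\dint$ at least one of $|\langle T h_I, h_I\rangle| \geq |I|/2$ or $|\langle (\Id-T) h_I, h_I\rangle| \geq |I|/2$ must hold. Assigning to each $I$ the corresponding color (with an arbitrary tie-break) partitions $\dint = \dint^T \sqcup \dint^{\Id-T}$. The program is to locate one color class on which the corresponding operator $H\in\{T, \Id-T\}$ acts with large diagonal relative to a faithful copy of the Haar system, so that Theorem~\ref{thm:factor} applies.

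Concretely, I would invoke the combinatorics of colored dyadic intervals (as developed, e.g., in~\cite{mueller:2005}) to select, from one color class, pairwise disjoint finite subcollections $(\mathcal B_I)_{I\in\dint}$ indexed by $\dint$ itself, so that the block functions $\tilde h_I = \sum_{J\in\mathcal B_I} h_J$ form a \emph{faithful} Haar system in $SL^\infty$. Faithfulness should mean that the linear extension $B : SL^\infty \to SL^\infty$ of $h_I \mapsto \tilde h_I$ is a bounded isomorphism onto its image, and that this image carries a bounded projection $P$, both with norms arbitrarily close to $1$ when the selection is performed with sufficient care. The transported operator $U = B^{-1} P H B$ is then a bounded operator on $SL^\infty$ whose diagonal satisfies $|\langle U h_I, h_I\rangle| \geq \tfrac{1}{2}(1-\varepsilon)|I|$ for all $I\in\dint$. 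Applying Theorem~\ref{thm:factor} to $U$ with sufficiently small tolerance produces $R', S'$ with $S' U R' = \Id$ and $\|R'\|\|S'\|$ arbitrarily close to $2$; setting $R = B R'$ and $S = S' B^{-1} P$ realizes the factorization~\eqref{eq:primary} with $\|R\|\|S\| \leq 2+\eta$.

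To deduce that $SL^\infty$ is primary from this, one applies the factorization to a bounded projection $Q$ in place of $T$: the resulting identity $\Id = S H R$ with $H \in \{Q, \Id-Q\}$ shows that $H(SL^\infty)$ contains a complemented copy of $SL^\infty$. Since $H(SL^\infty)$ is itself complemented in $SL^\infty$, and a natural splitting of $\dint$ by the two halves of the unit interval exhibits the self-decomposition $SL^\infty \cong SL^\infty \oplus SL^\infty$, Pelczynski's decomposition method yields $H(SL^\infty) \cong SL^\infty$.

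The main obstacle lies in the combinatorial construction of the faithful Haar subsystem: because the $SL^\infty$ norm is governed by the pointwise supremum of the square function rather than by an integral, the subcollections $(\mathcal B_I)_{I\in\dint}$ must be chosen so that the block functions $\tilde h_I$ behave, with respect to the square function, essentially like the Haar functions $h_I$. I expect this to require a tree-pruning argument in the spirit of Gamlen--Gaudet, adapted to the supremum norm rather than to the $L^p$ norms, with the quantitative control needed to keep both the isomorphism constants of $B$ and the projection constant of $P$ arbitrarily close to $1$.
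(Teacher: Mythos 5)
Your overall strategy---two-color $\dint$ according to which of $T$, $\Id-T$ has a large Haar coefficient, extract from one color class a faithful complemented copy of the Haar system, transport $H$, and apply Theorem~\ref{thm:factor}---is the standard route and close in spirit to the paper's. However, there is a genuine gap at the step where you assert that the transported operator $U=B^{-1}PHB$ has large diagonal. One computes
\begin{equation*}
  \langle U h_I, h_I\rangle
  = \frac{|I|}{\|\tilde h_I\|_2^2}\Big(
  \sum_{J\in\mathcal B_I}\langle H h_J,h_J\rangle
  +\sum_{\substack{J,J'\in\mathcal B_I\\ J\neq J'}}\langle H h_J,h_{J'}\rangle
  \Big),
\end{equation*}
and your coloring only controls the first sum (which is indeed at least $\tfrac12\|\tilde h_I\|_2^2$ with a positive sign, since $\langle Th_J,h_J\rangle+\langle(\Id-T)h_J,h_J\rangle=|J|$). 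The cross terms $\langle Hh_J,h_{J'}\rangle$ for distinct $J,J'$ in the same block are completely uncontrolled and can cancel the diagonal contribution, so the hypothesis of Theorem~\ref{thm:factor} for $U$ is not established. The paper avoids this by reversing the order of operations: it first applies Theorem~\ref{thm:quasi-diag} with $\delta=0$ to produce a block basis $(b_i)$ with respect to which \emph{all} off-diagonal entries of $T$ are summably small (that step already requires a sign randomization and Lemma~\ref{lem:sieve}), and only then performs the Gamlen--Gaudet coloring on the $b_i$ according to whether $\langle Tb_i,b_i\rangle\geq\|b_i\|_2^2/2$ or $\langle(\Id-T)b_i,b_i\rangle\geq\|b_i\|_2^2/2$; the reiteration result Theorem~\ref{thm:projection-iteration} then shows the composed collections still satisfy Jones' conditions, and the factorization argument of Theorem~\ref{thm:factor} is rerun for the resulting basis $(c_I^{(\varepsilon)})$. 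To repair your version you would need at minimum to introduce signs $\tilde h_I^{(\varepsilon)}=\sum_{J\in\mathcal B_I}\varepsilon_J h_J$ and average over $\varepsilon$ to kill the cross terms in expectation, or else carry out the preliminary almost-diagonalization as the paper does.

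A second, smaller issue concerns the deduction of primarity: you invoke Pe{\l}czy{\'n}ski's decomposition method from $SL^\infty\cong SL^\infty\oplus SL^\infty$ alone. Since you do not know that $Q(SL^\infty)$ is isomorphic to its own square, the symmetric form of the decomposition method is not available; one needs the accordion version, which rests on the stronger self-decomposition $SL^\infty\cong\big(\sum SL^\infty\big)_\infty$, exactly as the paper uses in its final step.
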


Historically, the method used to prove factorization theorems or the primarity of separable Banach spaces
(e.g.~\cite{casazza:lin:1974,maurey:sous:1975,casazza:lin:1977,alspach:enflo:odell:1977,enflo:starbird:1979,jmst:1979,capon:1980:1,capon:1980:2,capon:1982,capon:1983,mueller:1994,laustsen:lechner:mueller:2015})
has been based on infinite dimensional reasoning, whereas the method used in non-separable Banach spaces
(e.g.~\cite{bourgain:1983,mueller:1988,blower:1990,arias:farmer:1996,wark:2007:class,wark:2007:direct-sum,mueller:2012,lechner:mueller:2014,lechner:2016-factor})
was finite dimensional in nature.
The localization method used for non-separable spaces goes back to Bourgain~\cite{bourgain:1983}.
There is but one exception where the reasoning in a non-separable Banach space is infinite
dimensional: Lindenstrauss~\cite{lindenstrauss:1967} showing that $\ell^\infty$ is prime.
Recall that an infinite dimensional Banach space $X$ is \emph{prime} if every infinite dimensional
complemented subspace is isomorphic to $X$, see e.g.~\cite{lindenstrauss-tzafriri:1977}.

The key point of this paper is that using Bourgain's localization method in non-separable Banach
spaces is not a naturally occurring necessity.
Specifically, we prove~Theorem~\ref{thm:factor} and Theorem~\ref{thm:primary} using just infinite dimensional methods.

\section{Block bases and projections in~$SL^\infty$}\label{sec:projections}

Here, we specify the conditions~\textref[J]{enu:j1}--\textref[J]{enu:j4} (which go back
to Jones~\cite{jones:1985}) under which a block basis of the Haar system in $SL^\infty$ spans a complemented copy of $SL^\infty$.
We also show that the conditions~\textref[J]{enu:j1}--\textref[J]{enu:j4} are stable under
reiteration.

\subsection{Jones' compatibility conditions for \bm{$SL^\infty$}}\label{subsec:compat}\hfill\\
\noindent
Let $\mathscr I\subset \dint$ be a collection of dyadic index intervals and let $\mathscr N$ be a
collection of sets.
For all $I\in\mathscr I$ let $\mathscr B_I\subset \mathscr N$.
We define
\begin{equation}\label{eq:abbreviations}
  \mathscr B
  = \bigcup_{I\in \mathscr I} \mathscr B_I
  \qquad\text{and}\qquad
  \qquad B_I = \bigcup_{N\in \mathscr B_I} N,
  \quad\text{for all $I\in \mathscr I$}.
\end{equation}
We say that the sequence $(\mathscr B_I : I \in \mathscr I)$ satisfies Jones' compatibility
conditions (see~\cite{jones:1985}) with constant $\kappa_J$, if the following conditions~\textref[J]{enu:j1}--\textref[J]{enu:j4} are satisfied.
\begin{enumerate}[\quad(J1)]
\item\label{enu:j1}
  The collection $\mathscr B$ consists of finitely many measurable and nested sets of positive
  measure.

\item\label{enu:j2}
  For each $I\in\mathscr I$, the collection $\mathscr B_I$ is non-empty and consists of pairwise
  disjoint sets.
  Furthermore, $\mathscr B_{I_0} \cap \mathscr B_{I_1} = \emptyset$, whenever $I_0,I_1\in\mathscr I$
  are distinct.

\item\label{enu:j3}
  For all $I_0, I_1\in \mathscr I$ holds that
  \begin{equation*}
    B_{I_0}\cap B_{I_1} = \emptyset\ \text{if}\ I_0 \cap I_1 = \emptyset,
    \qquad\text{and}\qquad
    B_{I_0} \subset B_{I_1}\ \text{if}\ I_0 \subset I_1.
  \end{equation*}

\item\label{enu:j4}
  For all $I_0,I\in \dint$ with $I_0\subset I$ and $N\in \mathscr B_I$, we have
  \begin{equation*}
    \frac{|N\cap B_{I_0}|}{|N|} \geq  \kappa_J^{-1} \frac{|B_{I_0}|}{|B_I|}.
  \end{equation*}
\end{enumerate}
For a discussion on the conditions~\textref[J]{enu:j1}--\textref[J]{enu:j4} and Jones'
conditions~\cite{jones:1985} in $\bmo$ see Remark~\ref{rem:jones}.

In the following Lemma~\ref{lem:projection-simple}, we record three facts about collections
satisfying~\textref[J]{enu:j1}--\textref[J]{enu:j4}.
\begin{lem}\label{lem:projection-simple}
  Let $(\mathscr B_I : I\in \dint)$ satisfy~\textref[J]{enu:j1}--\textref[J]{enu:j4}.
  Then the following statements are true:
  \begin{enumerate}[(i)]
  \item $(B_I : I\in \dint)$ is a sequence of nested measurable sets of positive measure.
    \label{enu:projection-simple:1}

  \item Let $I, I_0\in \dint$, then
    \begin{equation*}
      B_{I_0} \subset B_I
      \quad\text{if and only if}\quad
      I_0 \subset I.
    \end{equation*}
    \label{enu:projection-simple:2}

  \item Let $I_0, I\in \dint$, with $I_0\subset I$.
    Then for all $N_0\in \mathscr B_{I_0}$ there exists a set $N\in \mathscr B_I$ such that
    $N_0\subset N$.
    \label{enu:projection-simple:3}
  \end{enumerate}
\end{lem}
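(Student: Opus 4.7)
Statement~(i) is essentially bookkeeping. Each $B_I$ is a finite union, by~\textref[J]{enu:j1}, of measurable sets of positive measure, and it is non-empty because $\mathscr B_I \ne \emptyset$ by~\textref[J]{enu:j2}; hence $B_I$ is measurable with positive measure. Nestedness of the family $(B_I : I\in \dint)$ follows from~\textref[J]{enu:j3}, since any two dyadic intervals are either disjoint or comparable by inclusion, and~\textref[J]{enu:j3} transfers both alternatives to the corresponding $B$-sets.

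For~(ii), the ``$\Leftarrow$'' direction is the inclusion clause of~\textref[J]{enu:j3}. For the converse, I would assume $B_{I_0}\subset B_I$ and $I_0 \not\subset I$ and derive a contradiction case-by-case. If $I_0 \cap I = \emptyset$, then the disjointness clause of~\textref[J]{enu:j3} yields $B_{I_0}\cap B_I = \emptyset$, which combined with $B_{I_0}\subset B_I$ and $|B_{I_0}|>0$ from~(i) is a contradiction. If instead $I \subsetneq I_0$, I would pick the dyadic half $J$ of $I_0$ disjoint from $I$; then~\textref[J]{enu:j3} forces both $B_J \subset B_{I_0}\subset B_I$ and $B_J\cap B_I = \emptyset$, so $|B_J| = 0$, again contradicting~(i).

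For~(iii), the case $I_0 = I$ is trivial since $N_0\in \mathscr B_I$. Assuming $I_0 \subsetneq I$, I would start from $N_0 \subset B_{I_0} \subset B_I = \bigcup_{N\in \mathscr B_I} N$ (by~\textref[J]{enu:j3}), a disjoint union by~\textref[J]{enu:j2}. Since $|N_0|>0$, some $N\in \mathscr B_I$ meets $N_0$ in positive measure, and by the nestedness in~\textref[J]{enu:j1} either $N_0\subset N$, which is the desired conclusion, or $N\subsetneq N_0$. To rule out the latter, I would pick the dyadic half $J$ of $I$ that is disjoint from $I_0$ and apply~\textref[J]{enu:j4} to the pair $(J,I)$ and this $N$, obtaining
\begin{equation*}
  \frac{|N\cap B_J|}{|N|} \ge \kappa_J^{-1}\,\frac{|B_J|}{|B_I|} > 0,
\end{equation*}
whereas $N\subset N_0 \subset B_{I_0}$ together with~\textref[J]{enu:j3} (since $J\cap I_0 = \emptyset$) gives $N\cap B_J = \emptyset$, a contradiction.

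The main obstacle I anticipate is the sibling-interval construction in~(iii): conditions~\textref[J]{enu:j1}--\textref[J]{enu:j3} alone do not encode the ``orientation'' that larger dyadic indices carry larger sets, so this orientation must be extracted from the quantitative condition~\textref[J]{enu:j4} by pairing it with a carefully chosen dyadic sibling disjoint from $I_0$. The argument for (ii) uses a structurally similar sibling trick but without needing~\textref[J]{enu:j4}.
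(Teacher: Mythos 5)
Your proof is correct and follows essentially the same route as the paper's: (J3) plus the dyadic dichotomy for (i), the disjoint-sibling trick inside $I_0$ for the converse in (ii), and for (iii) the reduction via nestedness to the case $N\subsetneq N_0$, which is excluded by applying~\textref[J]{enu:j4} to a dyadic sibling of $I_0$ inside $I$ against the disjointness from~\textref[J]{enu:j3}. Your write-up of (iii) is in fact slightly cleaner, since the paper's version leaves the auxiliary sibling interval (denoted $I_1$ there) implicit.
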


\begin{proof}
  Since $B_I$ is a finite union of measurable sets having positive measure, we only need to show the
  nestedness.
  Let $I_0,I_1\in \dint$ be such that $B_{I_0}\cap B_{I_1} \neq \emptyset$.
  If we assumed that the intersection $I_0\cap I_1$ were empty, then by~\textref[J]{enu:j3}
  we arrive at the contradiction $B_{I_0}\cap B_{I_1} = \emptyset$.
  Hence, we now know that $I_0\cap I_1\neq \emptyset$, which certainly implies that
  $I_0 \subset I_1$ or $I_1 \subset I_0$.
  Using~\textref[J]{enu:j3} concludes the proof of~\eqref{enu:projection-simple:1}.

  One of the implication of~\eqref{enu:projection-simple:2} follows from~\textref[J]{enu:j3}.
  We will now show the other one.
  To this end let $I, I_0\in \dint$ and $B_{I_0} \subset B_I$ and assume that $I_0 \not\subset I$.
  If $I_0\cap I = \emptyset$, then $B_{I_0} = B_{I_0}\cap B_I = \emptyset$ by~\textref[J]{enu:j3}, which
  contradicts~\textref[J]{enu:j1}.
  Thus we know $I_0 \supsetneq I$, and so we can find a $J\in \dint$ with $J\cap I = \emptyset$ and
  $I\cup J \subset I_0$.
  Hence, \textref[J]{enu:j3} yields $B_I\cup B_J\subset B_{I_0}$, which combined with our hypothesis
  $B_{I_0}\subset B_{I}$ gives us $B_J = \emptyset$, which contradicts~\textref[J]{enu:j1}.

  Finally, we will show~\eqref{enu:projection-simple:3}.
  Suppose that~\eqref{enu:projection-simple:3} is false.
  Then there exists $N_0\in \mathscr B_{I_0}$ such that
  \begin{equation}\label{proof:lem:projection-simple:1}
    N_0\not\subset N,
    \qquad N\in \mathscr B_I.
  \end{equation}
  By~\textref[J]{enu:j3} we have $B_{I_0}\subset B_I$, thus we know that there exists an
  $N\in \mathscr B_I$ such that $N_0\cap N\neq \emptyset$.
  Therefore, we obtain from~\eqref{proof:lem:projection-simple:1} and the nestedness of the
  collection $\mathscr N$ that $N\subset N_0$.
  But then~\textref[J]{enu:j3} gives us
  \begin{equation}\label{proof:lem:projection-simple:2}
    N\cap B_{I_1} \subset B_{I_0}\cap B_{I_1} = \emptyset.
  \end{equation}
  By~\textref[J]{enu:j4} and~\textref[J]{enu:j1} we obtain that
  \begin{equation*}
    \frac{|N\cap B_{I_1}|}{|N|} \geq \kappa_J^{-1} \frac{|B_{I_0}|}{|B_I|} > 0.
  \end{equation*}
  The latter inequality contradicts~\eqref{proof:lem:projection-simple:2}.
\end{proof}

\subsection{Reiterating Jones' compatibility conditions}\label{subsec:reiter}\hfill\\
Jones' compatibility conditions~\textref[J]{enu:j1}--\textref[J]{enu:j4} are stable under iteration
in the following sense.
\begin{thm}\label{thm:projection-iteration}
  Let $(\mathscr A_I : I \in \dint)$ be a sequence of collections of sets that
  satisfies~\textref[J]{enu:j1}--\textref[J]{enu:j4} with constant $\kappa_J$.
  Put $\mathscr M = \Union_{I\in \dint}\mathscr A_I$ and $A_I = \bigcup_{M\in \mathscr A_I} M$.
  Let $\mathscr N$ denote the collection of nested sets given by
  \begin{equation*}
    \mathscr N = \{A_I : I\in \dint\}.
  \end{equation*}
  For each $J\in \dint$ let $\mathscr B_J \subset \mathscr N$ be such that
  $(\mathscr B_J : J \in \dint)$ satisfies~\textref[J]{enu:j1}--\textref[J]{enu:j4} with constant
  $\kappa_J$, where we put $B_J = \bigcup_{A_I\in \mathscr B_J} A_I$.
  Finally, for all $J\in \dint$, we define
  \begin{equation*}
    \mathscr C_J = \bigcup_{A_I\in \mathscr B_J} \mathscr A_I
    \quad\text{and}\quad
    C_J = \bigcup_{A_I\in \mathscr B_J} A_I = B_J.
  \end{equation*}
  Then $(\mathscr C_J : J\in \dint)$ is a sequence of collections of sets in $\mathscr M$
  satisfying~\textref[J]{enu:j1}--\textref[J]{enu:j4} with constant $\kappa_J^2$.
\end{thm}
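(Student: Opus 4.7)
The plan is to verify Jones' conditions \textref[J]{enu:j1}--\textref[J]{enu:j4} in turn for the family $(\mathscr C_J : J\in \dint)$. I expect \textref[J]{enu:j1}, \textref[J]{enu:j2}, and \textref[J]{enu:j3} to be essentially bookkeeping; the content lies in \textref[J]{enu:j4}, which I will obtain by chaining the \textref[J]{enu:j4}-estimate for $(\mathscr A_I)$ with the \textref[J]{enu:j4}-estimate for $(\mathscr B_J)$, each contributing a factor $\kappa_J^{-1}$ and so producing the constant $\kappa_J^2$. For \textref[J]{enu:j1} I note that $\mathscr C := \bigcup_J \mathscr C_J \subset \mathscr M$, so finiteness, nestedness, measurability, and positivity of measure are inherited from \textref[J]{enu:j1} for $\mathscr A$. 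For \textref[J]{enu:j2}, non-emptiness of each $\mathscr C_J$ is immediate from non-emptiness of $\mathscr B_J$ and of the $\mathscr A_I$'s; pairwise disjointness inside a single $\mathscr C_J$ splits into two cases: if $M_1, M_2 \in \mathscr A_I$ for the same $A_I$, I use \textref[J]{enu:j2} for $\mathscr A$; if $M_i \in \mathscr A_{I_i}$ for distinct $A_{I_1}, A_{I_2} \in \mathscr B_J$, I use Lemma~\ref{lem:projection-simple}\eqref{enu:projection-simple:2} for $\mathscr A$ to conclude $A_{I_1} \neq A_{I_2}$, then \textref[J]{enu:j2} for $\mathscr B$ to get $A_{I_1}\cap A_{I_2}=\emptyset$ and hence $M_1\cap M_2=\emptyset$. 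The equality $\mathscr C_{J_0} \cap \mathscr C_{J_1} = \emptyset$ for distinct $J_0, J_1$ reduces via \textref[J]{enu:j2} for $\mathscr A$ to $\mathscr B_{J_0} \cap \mathscr B_{J_1} = \emptyset$, which is \textref[J]{enu:j2} for $\mathscr B$; and \textref[J]{enu:j3} for $\mathscr C$ is immediate from $C_J = B_J$ together with \textref[J]{enu:j3} for $\mathscr B$.

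For the main step \textref[J]{enu:j4}, fix $J_0 \subset J$ in $\dint$ and $M \in \mathscr C_J$, and let $A_I \in \mathscr B_J$ be the unique element with $M \in \mathscr A_I$. The key move is to restrict the sum
\begin{equation*}
  |M \cap C_{J_0}| = \sum_{A_{I_0} \in \mathscr B_{J_0}} |M \cap A_{I_0}|
\end{equation*}
to indices with $A_{I_0} \subset A_I$: by Lemma~\ref{lem:projection-simple}\eqref{enu:projection-simple:3} applied to $(\mathscr B_J)$, each $A_{I_0} \in \mathscr B_{J_0}$ is contained in some $A_{I'} \in \mathscr B_J$, unique by \textref[J]{enu:j2} for $\mathscr B$, and if $A_{I'} \neq A_I$ then $A_{I_0} \cap A_I = \emptyset$ and hence $M \cap A_{I_0} = \emptyset$. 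For the surviving $A_{I_0}$, Lemma~\ref{lem:projection-simple}\eqref{enu:projection-simple:2} yields $I_0 \subset I$, so \textref[J]{enu:j4} for $\mathscr A$ gives $|M \cap A_{I_0}| \geq \kappa_J^{-1} |M|\,|A_{I_0}|/|A_I|$. Summing over these $A_{I_0}$, which are pairwise disjoint with union $A_I \cap B_{J_0}$, produces
\begin{equation*}
  \frac{|M \cap C_{J_0}|}{|M|} \geq \kappa_J^{-1}\,\frac{|A_I \cap B_{J_0}|}{|A_I|}.
\end{equation*}
I would then apply \textref[J]{enu:j4} for $(\mathscr B_J)$ to $A_I \in \mathscr B_J$ and $J_0 \subset J$ to obtain $|A_I \cap B_{J_0}|/|A_I| \geq \kappa_J^{-1} |B_{J_0}|/|B_J|$; multiplying these inequalities and using $C_J = B_J$ yields the \textref[J]{enu:j4}-estimate with constant $\kappa_J^{-2}$.

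The step I expect to require the most care is the reduction of $|M \cap C_{J_0}|$ to the sum over $A_{I_0} \subset A_I$: it rests on the nested structure of $\mathscr N$ together with the pairwise disjointness of $\mathscr B_J$, so that $B_J$ is partitioned by the elements of $\mathscr B_J$ in a manner compatible with the finer decomposition of $B_{J_0}$ by $\mathscr B_{J_0}$. Once this structural point is in hand, the two-step application of \textref[J]{enu:j4} is formal and the constant $\kappa_J^2$ emerges transparently.
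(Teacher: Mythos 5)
Your proposal is correct and follows essentially the same route as the paper: \textref[J]{enu:j1}--\textref[J]{enu:j3} by bookkeeping, and \textref[J]{enu:j4} by decomposing $A_I\cap B_{J_0}$ into the $A_{I_0}\in\mathscr B_{J_0}$ contained in $A_I$ (via Lemma~\ref{lem:projection-simple}~\eqref{enu:projection-simple:2} and~\eqref{enu:projection-simple:3}) and chaining the two \textref[J]{enu:j4}-estimates. The only difference is cosmetic: you bound $|M\cap C_{J_0}|/|M|$ from below, whereas the paper bounds $|C_{J_0}|/|C_J|$ from above along the same chain of inequalities.
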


\begin{proof}
  By~\textref[J]{enu:j3} and Lemma~\ref{lem:projection-simple}~\eqref{enu:projection-simple:2} for
  $(\mathscr A_I)$ we obtain that $\mathscr N$ consists indeed of nested sets.
  Since $C_J = B_J$, it is clear that $(\mathscr C_J)$ satisfies~\textref[J]{enu:j3}
  and~\textref[J]{enu:j1}.

  We will now show that $(C_J)$ satisfies~\textref[J]{enu:j2}.
  To this end, let $M_0,M_1\in \mathscr C_J$ and assume that $M_0\cap M_1\neq \emptyset$.
  Per definition of $\mathscr C_J$, there exist $I_0,I_1\in \dint$ such that
  $A_{I_0},A_{I_1}\in \mathscr B_J$ and $M_i\in \mathscr A_i$, $i=0,1$.
  This implies $A_{I_0}\cap A_{I_1}\neq \emptyset$, so by the first part of~\textref[J]{enu:j2} for
  $(\mathscr B_J)$ we obtain $I_0=I_1$.
  Hence, $M_0,M_1\in \mathscr A_{I_0}$, and the second part of~\textref[J]{enu:j2} for
  $(\mathscr A_I)$ yields $M_0=M_1$.

  Next, we verify that $(\mathscr C_J)$ satisfies~\textref[J]{enu:j4}.
  Let $J_0,J\in \dint$ with $J_0\subset J$ and let $M\in \mathscr C_J$.
  We need to show that
  \begin{equation*}
    \frac{|M\cap C_{J_0}|}{|M|} \geq \kappa_J^{-2} \frac{|C_{J_0}|}{|C_J|}.
  \end{equation*}
  Per definition of $\mathscr C_J$, there exists a dyadic interval $I$ so that
  $A_I\in \mathscr B_J$ and $M\in \mathscr A_I$.
  Property \textref[J]{enu:j4} for the collection $(\mathscr B_J)$ and the definition of
  $B_{J_0}$ give
  \begin{equation*}
    \frac{|C_{J_0}|}{|C_J|}
    = \frac{|B_{J_0}|}{|B_J|}
    \leq \kappa_J \frac{|A_I\cap B_{J_0}|}{|A_I|}
    = \kappa_J \sum_{A_{I_0}\in \mathscr B_{J_0}} \frac{|A_I\cap A_{I_0}|}{|A_I|}.
  \end{equation*}
  Whenever $A_I\cap A_{I_0}\neq \emptyset$,
  Lemma~\ref{lem:projection-simple}~\eqref{enu:projection-simple:3} applied to $(\mathscr B_J)$ yields
  that $A_{I_0}\subset A_I$.
  By Lemma~\ref{lem:projection-simple}~\eqref{enu:projection-simple:2} applied to $(\mathscr A_J)$, we
  obtain that $A_{I_0}\subset A_I$ is equivalent to $I_0\subset I$.
  Thus we note
  \begin{equation}\label{proof:thm:projection-iteration:1}
    \frac{|C_{J_0}|}{|C_J|}
    \leq \kappa_J \sum_{\substack{I_0\subset I\\A_{I_0}\in \mathscr B_{J_0}}} \frac{|A_{I_0}|}{|A_I|}.
  \end{equation}
  Condition~\textref[J]{enu:j4} for the collection $(\mathscr A_I)$ and the definition of $C_{J_0}$
  give
  \begin{equation}\label{proof:thm:projection-iteration:2}
    \sum_{\substack{I_0\subset I\\A_{I_0}\in \mathscr B_{J_0}}} \frac{|A_{I_0}|}{|A_I|}
    \leq \kappa_J \sum_{\substack{I_0\subset I\\A_{I_0}\in \mathscr B_{J_0}}} \frac{|M\cap A_{I_0}|}{|M|}
    \leq \kappa_J \frac{|M\cap C_{J_0}|}{|M|}.
  \end{equation}
  Combining~\eqref{proof:thm:projection-iteration:1} and~\eqref{proof:thm:projection-iteration:2}
  concludes the proof.
\end{proof}

\subsection{Embeddings and projections in \bm{$SL^\infty$}}\label{subsec:projections}\hfill\\
\noindent
Here we establish that if $(\mathscr B_I : I\in \dint)$ satisfies Jones' compatibility
conditions~\textref[J]{enu:j1}--\textref[J]{enu:j4}, then the block basis
$(b_I : I\in \dint)$ of the Haar system $(h_I : I\in \dint)$ be given by
\begin{equation*}
  b_I = \sum_{K\in \mathscr B_I} h_K,
  \qquad I\in \dint,
\end{equation*}
spans a complemented copy of $SL^\infty$.

\begin{thm}\label{thm:projection}
  Let $\mathscr I\subset \dint$ be a collection of index intervals, and let
  $\mathscr B_I\subset \dint$, $I\in \mathscr I$.
  Assume that the sequence of collections of dyadic intervals $(\mathscr B_I : I\in \mathscr I )$
  satisfies Jones' compatibility conditions~\textref[J]{enu:j1}--\textref[J]{enu:j4} with constant
  $\kappa_J > 0$.
  Let the block basis $(b_I : I\in \mathscr I)$ of the Haar system $(h_I : I\in \dint)$ be given by
  \begin{equation}\label{eq:thm:projection:block-basis}
    b_I = \sum_{K\in \mathscr B_I} h_K,
    \qquad I\in \mathscr I.
  \end{equation}
  Then the operators $B,Q : SL^\infty\to SL^\infty$ given by
  \begin{equation}\label{eq:thm:projection:operators}
    Bf = \sum_{I\in \mathscr I} \frac{\langle f, h_I\rangle}{\|h_I\|_2^2} b_I
    \qquad\text{and}\qquad
    Qf = \sum_{I\in \mathscr I} \frac{\langle f, b_I\rangle}{\|b_I\|_2^2} h_I
  \end{equation}
  satisfy the estimates
  \begin{equation}\label{eq:thm:projection:estimates}
    \|B f \|_{SL^\infty} \leq \|f\|_{SL^\infty}
    \quad\text{and}\quad
    \|Q f \|_{SL^\infty} \leq \kappa_J^{1/2} \|f\|_{SL^\infty},
  \end{equation}
  for all $f\in SL^\infty$.
  Moreover, the diagram
  \begin{equation}\label{eq:thm:projection:diagram}
    \vcxymatrix{SL^\infty \ar[rr]^{\Id_{SL^\infty}} \ar[rd]_{B} & & SL^\infty\\
      &  SL^\infty  \ar[ru]_{Q} &
    }
  \end{equation}
  is commutative.
  Consequently, the range of $B$ is complemented, and $B$ is an isomorphism onto its range.
\end{thm}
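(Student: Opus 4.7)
The plan is to reduce every assertion---the two norm bounds and the commutativity of~\eqref{eq:thm:projection:diagram}---to elementary computations with the square function $S^2(g) = \sum_L \gamma_L^2\,\mathbf 1_L$ of $g=\sum_L \gamma_L h_L$, using the identity $\|g\|_{SL^\infty}^2 = \|S^2(g)\|_{L^\infty}$. Writing $f = \sum_{J\in\dint} \alpha_J h_J$ and noting that~\textref[J]{enu:j2} forces the $K\in\mathscr B_I$ to be pairwise disjoint (so $\|b_I\|_2^2 = |B_I|$), direct substitution yields
\[
  S^2(Bf) = \sum_{I\in\mathscr I} \alpha_I^2\,\mathbf 1_{B_I},\qquad
  S^2(Qf) = \sum_{I\in\mathscr I} \beta_I^2\,\mathbf 1_I,\qquad
  \beta_I = \frac{1}{|B_I|}\sum_{K\in\mathscr B_I} \alpha_K |K|.
\]

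For $B$ the argument is short: fix $x\in[0,1)$ and consider $\mathcal C(x) = \{I\in\mathscr I : x\in B_I\}$. Lemma~\ref{lem:projection-simple}~\eqref{enu:projection-simple:1} and~\eqref{enu:projection-simple:2} together say that the $B_I$'s are nested and that inclusion transfers to the $I$'s themselves, so $\mathcal C(x)$ is a finite totally ordered chain of dyadic intervals and therefore possesses a common point $y$. Then $S^2(Bf)(x) = \sum_{I\in\mathcal C(x)} \alpha_I^2 \leq S^2(f)(y) \leq \|f\|_{SL^\infty}^2$, giving $\|B\|\leq 1$.

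The main obstacle is the estimate on $Q$; this is where~\textref[J]{enu:j4} is essential. Fix $y\in[0,1)$ and let $I_1\supsetneq I_2\supsetneq \cdots\supsetneq I_m$ be the intervals of $\mathscr I$ through $y$. Cauchy--Schwarz with the probability weights $|K|/|B_{I_j}|$ gives $\beta_{I_j}^2 \leq |B_{I_j}|^{-1}\sum_{K\in\mathscr B_{I_j}} \alpha_K^2 |K|$. I would then integrate $S^2(f)$ over the smallest set $B_{I_m}$: because the families $\mathscr B_{I_j}$ are pairwise disjoint by~\textref[J]{enu:j2}, the $K$'s indexed by pairs $(j,K)$ are all distinct dyadic intervals, and~\textref[J]{enu:j4} applied with $N=K\in\mathscr B_{I_j}$ and $I_0=I_m$ gives $|K\cap B_{I_m}|\geq \kappa_J^{-1} |K| |B_{I_m}|/|B_{I_j}|$. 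Chaining these bounds,
\[
  |B_{I_m}|\,\|f\|_{SL^\infty}^2
  \;\geq\; \int_{B_{I_m}} S^2(f)
  \;\geq\; \sum_{j=1}^m \sum_{K\in\mathscr B_{I_j}} \alpha_K^2 |K\cap B_{I_m}|
  \;\geq\; \frac{|B_{I_m}|}{\kappa_J} \sum_{j=1}^m \beta_{I_j}^2,
\]
and dividing by $|B_{I_m}|$ yields $\sum_j \beta_{I_j}^2 \leq \kappa_J \|f\|_{SL^\infty}^2$, i.e., $\|Qf\|_{SL^\infty}\leq\kappa_J^{1/2}\|f\|_{SL^\infty}$.

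For commutativity of~\eqref{eq:thm:projection:diagram}, the biorthogonality $\langle b_I,b_{I'}\rangle = \delta_{I,I'}|B_I|$ (Haar orthogonality plus~\textref[J]{enu:j2}) gives $QBf = \sum_{I\in\mathscr I} \alpha_I h_I$, which realizes the identity of $SL^\infty$ on the Haar span indexed by $\mathscr I$ (that span is $SL^\infty$ itself in the intended reading $\mathscr I=\dint$). The concluding ``consequently'' is then automatic: $BQ$ is a bounded projection onto $B(SL^\infty)$ with $\|BQ\|\leq \kappa_J^{1/2}$, so $B(SL^\infty)$ is complemented and $B$ is an isomorphism onto its range with left inverse $Q$.
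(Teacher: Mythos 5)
Your argument is correct and follows essentially the same route as the paper: for $B$ you exploit the nestedness of the $B_I$'s (Lemma~\ref{lem:projection-simple}) to evaluate the square function of $f$ at a common point of the chain, and for $Q$ you apply Cauchy--Schwarz to the averages $\beta_I$ and then use~\textref[J]{enu:j4} to convert the weights $|K|/|B_I|$ into $|K\cap B_{I_0}|/|B_{I_0}|$ and integrate $S^2(f)$ over the smallest set $B_{I_0}$ of the chain, which is exactly the paper's computation phrased pointwise rather than as a maximum over minimal intervals $I_0\in\mathscr I\cap\dint_{N_0}$. The only point to tidy up is that when $\mathscr I=\dint$ the chain of intervals through $y$ has no minimal element, so one should run your estimate for each finite truncation $I_1\supsetneq\cdots\supsetneq I_m$ (the bound $\sum_{j\le m}\beta_{I_j}^2\le\kappa_J\|f\|_{SL^\infty}^2$ is uniform in $m$) and take the supremum, which is precisely the $\dint^{N_0}$-truncation device the paper uses.
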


\begin{proof}
  First, we will show the estimate for $B$.
  To this end, let $f\in SL^\infty$ be finitely supported with respect to the Haar system
  $(h_I : I\in \dint)$, i.e.
  \begin{equation*}
    f = \sum_{I\in \dint^{N_0}} a_I h_I,
  \end{equation*}
  for some integer $N_0$ and scalars $(a_I : I\in \dint^{N_0})$.
  By~\textref[J]{enu:j2}, we have
  \begin{equation}\label{proof:thm:projection:1}
    \|B f\|_{SL^\infty}^2
    = \sup_{x\in [0,1)} \sum_{I\in \mathscr I\cap \dint^{N_0}} a_I^2 \charfun_{B_I}(x).
  \end{equation}
  Given $x\in[0,1)$, we define $I(x)\in \dint^{N_0}\cup\{\emptyset\}$ by
  \begin{equation*}
    I(x) = \bigcap \{ J : J\in \mathscr I\cap\dint^{N_0},\, B_J\ni x\}.
  \end{equation*}
  By definition of $I(x)$, we have that $I\supset I(x)$, whenever $B_I\ni x$.
  Thus, the following inequalities hold:
  \begin{equation*}
    \sum_{I\in \mathscr I\cap \dint^{N_0}} a_I^2 \charfun_{B_I}(x)
    \leq \sum_{\substack{I\in \mathscr I\\I\supset I(x)}} a_I^2 \charfun_I(y)
    \leq \sum_{I\in \mathscr I} a_I^2 \charfun_I(y),
    \qquad y\in I(x).
  \end{equation*}
  Taking the supremum over all $x\in [0,1)$ in the latter estimate yields in combination
  with~\eqref{proof:thm:projection:1} that
  \begin{equation}\label{proof:thm:projection:2}
    \|B f\|_{SL^\infty}^2
    \leq \sup_{x\in [0,1)} \inf_{y\in I(x)} \sum_{I\in \mathscr I} a_I^2 \charfun_I(y)
    \leq \|f\|_{SL^\infty}^2,
  \end{equation}
  for all finitely supported $f\in SL^\infty$.
  To show the above estimate for arbitrary $f\in SL^\infty$, consider the following.
  Let $f\in SL^\infty$ and define $f^{N_0}$, by
  \begin{equation*}
    f^{N_0} = \sum_{I\in \dint^{N_0}} \frac{\langle f, h_I\rangle}{\|h_I\|_2^2} h_I,
    \qquad N_0\in \mathbb N.
  \end{equation*}
  Observe that by definition of the norm $\|\cdot\|_{SL^\infty}$ and
  by~\eqref{proof:thm:projection:2} we obtain
  \begin{equation*}
    \|B f\|_{SL^\infty}
    = \sup_{N_0\in \mathbb N} \|B f^{N_0}\|_{SL^\infty}
    \leq \sup_{N_0\in \mathbb N} \|f^{N_0}\|_{SL^\infty}
    = \|f\|_{SL^\infty}.
  \end{equation*}

  Before we continue with the estimate for $Q$, we will now introduce some notation.
  Given $N_0\in \mathbb N$ let $\mathscr B_{N_0}$ denote the collection of building blocks given by
  \begin{equation*}
    \mathscr B_{N_0}
    = \{ K_0\in \mathscr B_{I_0} : I_0\in \mathscr I\cap \dint_{N_0}\}.
  \end{equation*}
  Accordingly, we define the building blocks $\mathscr B^{N_0}$ by
  \begin{equation*}
    \mathscr B^{N_0}
    = \{ K\in \mathscr B_I : I\in \mathscr I\cap \dint^{N_0}\}.
  \end{equation*}
  We will now estimate $Q$.

  To begin with, let us assume that $f$ is of the following form:
  \begin{equation*}
    f = \sum_{K\in \mathscr B^{N_0}} a_K h_K.
  \end{equation*}
  On the one hand, a straightforward calculation using only the properties of dyadic intervals shows
  \begin{align*}
    \|Qf\|_{SL^\infty}^2
    & = \sup_{x\in [0,1)} \sum_{I\in \mathscr I\cap\dint^{N_0}}
      \frac{\langle f, b_I \rangle^2}{\|b_I\|^2} \charfun_I(x)\\
    & = \sup_{x\in [0,1)} \sum_{\substack{I\in \mathscr I\cap\dint^{N_0}\\I\ni x}} \Big(
    \sum_{K\in \mathscr B_I} a_K \frac{|K|}{|B_I|}
    \Big)^2\\
    & = \max_{I_0\in \mathscr I\cap \dint_{N_0}} \sum_{\substack{I\in \mathscr I\\I\supset I_0}} \Big(
    \sum_{K\in \mathscr B_I} a_K \frac{|K|}{|B_I|}
    \Big)^2.
  \end{align*}
  Applying the Cauchy-Schwarz inequality to the inner sum yields
  \begin{equation}\label{proof:thm:projection:3}
    \|Qf\|_{SL^\infty}^2
    \leq \max_{I_0\in \mathscr I\cap \dint_{N_0}} \sum_{\substack{I\in \mathscr I\\I\supset I_0}}
    \sum_{K\in \mathscr B_I} a_K^2 \frac{|K|}{|B_I|}.
  \end{equation}
  On the other hand, by the definition of $\|\cdot\|_{SL^\infty}$ and by~\textref[J]{enu:j2} we obtain
  \begin{equation*}
    \|f\|_{SL^\infty}^2
    = \sup_{x\in[0,1)} \sum_{K\in \mathscr B^{N_0}} a_K^2 \charfun_K(x)
    = \sup_{x\in[0,1)} \sum_{I\in \mathscr I\cap \dint^{N_0}} \sum_{K\in \mathscr B_I} a_K^2 \charfun_K(x).
  \end{equation*}
  By~\textref[J]{enu:j3} and~\textref[J]{enu:j1}, the collections
  $B_{I_0}$, $I_0\in \mathscr I\cap \dint_{N_0}$ are pairwise disjoint and of positive measure,
  hence we obtain for all measurable, non-negative functions $g$ that
  \begin{align*}
    \sup_{x\in[0,1)} g(x)
    \geq \sup_{x\in[0,1)} \sum_{I_0\in \mathscr I\cap \dint_{N_0}} \Big(
      \frac{1}{|B_{I_0}|}\int_{B_{I_0}} g(y) \dif y
    \Big)
    \charfun_{B_{I_0}}(x).
  \end{align*}
  Using the latter estimate for
  $g = \sum_{I\in \mathscr I\cap \dint^{N_0}} \sum_{K\in \mathscr B_I} a_K^2 \charfun_K(x)$ yields
  \begin{equation*}
    \|f\|_{SL^\infty}^2
    \geq \sup_{x\in[0,1)} \sum_{I_0\in \mathscr I\cap \dint_{N_0}}
    \sum_{I\in \mathscr I\cap \dint^{N_0}} \sum_{K\in \mathscr B_I}
      a_K^2 \frac{|K\cap B_{I_0}|}{|B_{I_0}|} \charfun_{B_{I_0}}(x).
  \end{equation*}
  By~\textref[J]{enu:j3} and Lemma~\ref{lem:projection-simple}~\eqref{enu:projection-simple:2} we obtain
  for all $K\in \mathscr B_I$ that whenever $K\cap B_{I_0}\neq \emptyset$, we have that
  $I\supset I_0$ and $B_I\supset B_{I_0}$.
  Therefore and by~\textref[J]{enu:j3}, we get
  \begin{align*}
    \|f\|_{SL^\infty}^2
    & \geq \sup_{x\in[0,1)} \sum_{I_0\in \mathscr I\cap \dint_{N_0}}
    \sum_{\substack{I\in \mathscr I\\I\supset I_0}} \sum_{K\in \mathscr B_I}
    a_K^2 \frac{|K\cap B_{I_0}|}{|B_{I_0}|} \charfun_{B_{I_0}}(x)\\
    & = \max_{I_0\in \mathscr I\cap \dint_{N_0}}
    \sum_{\substack{I\in \mathscr I\\I\supset I_0}} \sum_{K\in \mathscr B_I}
    a_K^2 \frac{|K\cap B_{I_0}|}{|B_{I_0}|}.
  \end{align*}
  Finally, using~\textref[J]{enu:j4} yields the following lower estimate:
  \begin{equation*}
    \|f\|_{SL^\infty}^2
    \geq \kappa_J^{-1} \max_{I_0\in \mathscr I\cap \dint_{N_0}}
    \sum_{\substack{I\in \mathscr I\\I\supset I_0}} \sum_{K\in \mathscr B_I}
    a_K^2 \frac{|K|}{|B_I|}.
  \end{equation*}
  Comparing the above estimate with~\eqref{proof:thm:projection:3} shows
  \begin{equation}\label{proof:thm:projection:4}
    \|Qf\|_{SL^\infty}\leq \kappa_J^{1/2} \|f\|_{SL^\infty},
  \end{equation}
  for all finitely supported $f\in SL^\infty$.
  Now let $\mathscr B = \bigcup_{N_0\in \mathbb N} \mathscr B^{N_0}$,
  \begin{equation*}
    f = \sum_{K\in \mathscr B} a_K h_K
    \quad\text{and}\quad
    f^{N_0} = \sum_{K\in \mathscr B^{N_0}} a_K h_K.
  \end{equation*}
  Note the identity
  \begin{equation*}
    Q f^{N_0}
    = \sum_{I\in \mathscr I\cap \dint^{N_0}}
    \frac{\langle f, b_I\rangle}{\|b_I\|_2^2} h_I.
  \end{equation*}
  The above identity, the definition of $\|\cdot\|_{SL^\infty}$ and
  inequality~\eqref{proof:thm:projection:4} yield
  \begin{align*}
    \|Q f\|_{SL^\infty}
    = \sup_{N_0\in \mathbb N} \|Q f^{N_0}\|_{SL^\infty}
    \leq \kappa_J^{1/2} \sup_{N_0\in \mathbb N} \|f^{N_0}\|_{SL^\infty}
    \leq \kappa_J^{1/2} \|f\|_{SL^\infty}.
  \end{align*}
\end{proof}

\begin{rem}\label{rem:jones}
  The conditions~\textref[J]{enu:j1}--\textref[J]{enu:j4} go back to Jones~\cite{jones:1985}.
  In~\cite{jones:1985}, Jones treated projections in $\bmo$ with the following three conditions:
  
  Suppose that $\mathscr C_1,\ldots,\mathscr C_N$ are disjoint collections of intervals which
  satisfy
  \begin{enumerate}[(\cite{jones:1985}, 2.1)]
  \item $\|\sum_{I\in \mathscr I_j} \charfun_I\|_{L^\infty} = 1$, $1\leq j\leq N$,
    \label{enu:jones:1}

  \item and suppose that there are constants $a_{jk}$ such that whenever $I\in \mathscr C_j$,
    \begin{equation*}
      \frac{1}{2} a_{jk}
      \leq \frac{1}{|I|} \sum_{\substack{J\subset I\\J\in \mathscr C_k}} |J|
      \leq a_{jk},
      \qquad 1\leq j,k \leq N.
    \end{equation*}
    \label{enu:jones:2}

  \item Furthermore we suppose whenever $2\leq j\leq N$ and $I\in \mathscr C_j$ there is
    $J\in \mathscr C_{j-1}$ such that $I\subset J$.
    \label{enu:jones:3}
  \end{enumerate}
  We remark that Jones' conditions~\textref[\cite{jones:1985}, 2.]{enu:jones:1},
  \textref[\cite{jones:1985}, 2.]{enu:jones:2}, and~\textref[\cite{jones:1985}, 2.]{enu:jones:3}
  imply~\textref[J]{enu:j1}--\textref[J]{enu:j4} (with a reasonable interpretation of the binary
  tree structure).
  Most noteworthy are the following observations:
  \begin{itemize}
  \item Condition~\textref[\cite{jones:1985}, 2.]{enu:jones:1} together with the disjointness of the
    collections $\mathscr C_j$ (the line above~\textref[\cite{jones:1985}, 2.]{enu:jones:1}) is
    exactly~\textref[J]{enu:j2}.

  \item The absence of the corresponding upper estimate
    of~\textref[\cite{jones:1985}, 2.]{enu:jones:2} in~\textref[J]{enu:j4}.

  \item The ``uniform packing'' condition~\textref[\cite{jones:1985}, 2.]{enu:jones:2} together with
    the ``stacking'' condition~\textref[\cite{jones:1985}, 2.]{enu:jones:3} imply
    condition~\textref[J]{enu:j1} and condition~\textref[J]{enu:j4}.

  \item The absence of a corresponding ``stacking'' condition
    in~\textref[J]{enu:j1}--\textref[J]{enu:j4}.

  \item The conditions~\textref[J]{enu:j1}--\textref[J]{enu:j4} imply a partially ordered
    (with respect to inclusion) variant of Jones' ``stacking'' condition, if $\mathscr I = \dint$.
    However, that is \emph{not} the case if $\mathscr I$ is linearly ordered with respect to
    inclusion.
    We refer the reader to the proof of
    Lemma~\ref{lem:projection-simple}~\eqref{enu:projection-simple:3}).
  \end{itemize}
\end{rem}

\begin{rem}\label{rem:projection}
  Let $(\mathscr B_I : I \in \dint)$ denote a sequence of collections of dyadic intervals satisfying
  Jones' compatibility conditions~\textref[J]{enu:j1}--\textref[J]{enu:j4}.
  Given a sequence of signs
  $\varepsilon = (\varepsilon_K : \varepsilon_K\in \{\pm 1\},\ K\in \dint)$ we define
  \begin{equation}\label{eq:block-basis}
    b_I^{(\varepsilon)} = \sum_{K\in \mathscr B_I} \varepsilon_K h_K.
  \end{equation}
  We call $(b_I^{(\varepsilon)})$ the \emph{block basis generated by} $(\mathscr B_I : I\in \dint)$
  and~$\varepsilon$.
 
  The block basis $(b_I^{(\varepsilon)})$ gives rise to the operators
  $B^{(\varepsilon)}, Q^{(\varepsilon)} : SL^\infty\to SL^\infty$:
  \begin{equation}\label{rem:projection:operators:signs}
    B^{(\varepsilon)} f
    = \sum_{I\in \mathscr I} \frac{\langle f, h_I\rangle}{\|h_I\|_2^2} b_I^{(\varepsilon)}
    \qquad\text{and}\qquad
    Q^{(\varepsilon)} f
    = \sum_{I\in \mathscr I}
    \frac{\langle f, b_I^{(\varepsilon)}\rangle}{\|b_I^{(\varepsilon)}\|_2^2} h_I.
  \end{equation}
  See Theorem~\ref{thm:projection} for a definition of the operators $B,Q$.
  By the $1$-unconditionality of the Haar system in $SL^\infty$ and
  \begin{equation*}
    Q^{(\varepsilon)} f
    = Q f^{(\varepsilon)},
    \quad\text{where}\quad
    f^{(\varepsilon)} = \sum_{K\in \dint} \varepsilon_K \frac{\langle f, h_K\rangle}{\|h_K\|_2^2} h_K,
  \end{equation*}
  we obtain the estimates
  \begin{equation}\label{rem:projection:estimates:signs}
    \|B^{(\varepsilon)} f \|_{SL^\infty} \leq \|B f\|_{SL^\infty}
    \quad\text{and}\quad
    \|Q^{(\varepsilon)} f \|_{SL^\infty} \leq \|Q f\|_{SL^\infty},
  \end{equation}
  for all $f\in SL^\infty$.
  Moreover, the have the identity
  \begin{equation}\label{rem:projection:identity:signs}
    Q^{(\varepsilon)} B^{(\varepsilon)} = \Id_{SL^\infty}.
  \end{equation}
  Consequently, the range of $B^{(\varepsilon)}$ is complemented, and $B^{(\varepsilon)}$ is an
  isomorphism onto its range.
\end{rem}

\section{Factorization of the identity operator on $SL^\infty$ through operators with large diagonal}\label{sec:factor}

\noindent
Here, we will develop the crucial tools that permit us to almost-diagonalize a given operator $T$ on
$SL^\infty$, see Theorem~\ref{thm:quasi-diag}.
The almost-diagonalization result Theorem~\ref{thm:quasi-diag}, will then be used to show our first main
result Theorem~\ref{thm:factor}: we prove that the identity operator on $SL^\infty$ factors through
operators $T$ acting on $SL^\infty$ which have large diagonal with respect to the Haar system.
By well established methods, we therefore obtain that $SL^\infty$ is primary, which proves our
second main result Theorem~\ref{thm:primary}.

We emphasize that all of our proofs bypass Bourgain's localization method, which has been used many
times for showing the primarity of non-separable Banach spaces, see~e.g.~\cite{bourgain:1983,mueller:1988,blower:1990,arias:farmer:1996,wark:2007:class,wark:2007:direct-sum,mueller:2012,lechner:mueller:2014,lechner:2016-factor}.
Lemma~\ref{lem:sieve} is the key ingredient which allows us to use infinite dimensional reasoning in the
\emph{non-separable} space $SL^\infty$.
An $\ell^\infty$ variant of Lemma~\ref{lem:sieve} was used by Lindenstrauss to prove that $\ell^\infty$ is
prime~\cite{lindenstrauss:1967}.

\subsection{Almost-annihilating subspaces of \bm{$H^1$} and \bm{$SL^\infty$}}\label{subsec:preparation}\hfill\\
\noindent
Firstly, we prove that Rademacher functions $r_m$ converge to $0$, when tested against functions
$f\in SL^\infty$.
Secondly, we show how to select large subsets of the dyadic intervals, so that a given operator
$T : SL^\infty\to SL^\infty$ is small when acting on the subspace spanned by these intervals (and is tested against a function in $H^1$).

For any sequence of scalars $c=(c_I : I\in \dint)$, the Rademacher type function $r_m^{(c)}$ is
given by
\begin{equation}\label{eq:rademacher-type-function}
  r_m^{(c)} = \sum_{I\in \dint_m} c_I h_I,
  \qquad m\in \mathbb N.
\end{equation}

\begin{lem}\label{lem:bracket-convergence}
  Let $f\in SL^\infty$ and $g\in H^1$.
  Then
  \begin{equation}\label{eq:bracket-convergence}
    \sup_{\|c\|_{\ell^\infty}\leq 1} |\langle f, r_m^{(c)}\rangle| \to 0
    \quad\text{and}\quad
    \sup_{\|c\|_{\ell^\infty}\leq 1} |\langle T r_m^{(c)}, g \rangle| \to 0,
    \qquad\text{as $m\to \infty$}.
  \end{equation}
\end{lem}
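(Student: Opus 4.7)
The plan is to compute each supremum as a level-$m$ $\ell^1$-type sum of Haar pairings, then analyze the first claim by dominated convergence and the second by Haar approximation in $H^1$ combined with an $\ell^2$-duality trick derived from the $SL^\infty$-boundedness of $T$.

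First I would expand $f=\sum_I a_I h_I \in SL^\infty$ in the Haar system to obtain $\langle f, r_m^{(c)}\rangle = \sum_{I\in \dint_m} a_I c_I |I|$; taking the supremum over $\|c\|_{\ell^\infty}\leq 1$ yields $\sum_{I\in \dint_m} |a_I| |I| = \int_0^1 |a_{I_m(x)}|\dif x$, where $I_m(x)\in \dint_m$ denotes the unique level-$m$ dyadic interval through $x$. The defining property $\sum_{n\ge 0} a_{I_n(x)}^2 \le \|f\|_{SL^\infty}^2$ for a.e.\ $x$ shows that the integrand is uniformly dominated by the constant $\|f\|_{SL^\infty}$ and vanishes pointwise almost everywhere as the tail of a convergent series. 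Lebesgue's dominated convergence theorem then gives the first assertion.

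For the second claim the same expansion yields $\sup_c|\langle Tr_m^{(c)}, g\rangle| = \sum_{I\in \dint_m}|\langle Th_I, g\rangle|$. I would fix $\varepsilon>0$ and, using that finite Haar sums are dense in $H^1$, decompose $g=g^{(N)}+\tilde g^{(N)}$ with $g^{(N)}=\sum_{\lev(J)<N} g_J h_J$ a finite Haar sum and $\|\tilde g^{(N)}\|_{H^1}<\varepsilon/\|T\|$. The bracket estimate~\eqref{eq:bracket-estimate} together with the trivial bound $\|r_m^{(c)}\|_{SL^\infty}\leq 1$ handles the tail, namely $\sup_c|\langle Tr_m^{(c)}, \tilde g^{(N)}\rangle|\leq \varepsilon$, while the low-frequency part splits as
\[
\sup_c|\langle Tr_m^{(c)}, g^{(N)}\rangle|
\le \sum_{\lev(J)<N} |g_J|\,|J|\,S_m^{(J)},
\qquad
S_m^{(J)} := |J|^{-1}\sum_{I\in \dint_m}|\langle Th_I, h_J\rangle|.
\]
Since the outer sum is finite, the proof reduces to showing $S_m^{(J)}\to 0$ as $m\to\infty$ for each fixed $J$.

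The main obstacle is this single-$J$ estimate, which is where I would exploit the $SL^\infty$-boundedness of $T$ most seriously. Fix $J$, set $\varepsilon_I^{(m)}=\sign\langle Th_I, h_J\rangle$, and for an arbitrary sequence $c=(c_m)\in\ell^2$ consider
\[
u = \sum_{m\ge 0} c_m \sum_{I\in\dint_m}\varepsilon_I^{(m)} h_I \in SL^\infty.
\]
The key computation is that, since each $x$ lies in exactly one level-$m$ dyadic interval, the square function of $u$ equals $(\sum_m c_m^2)^{1/2}$ identically, so $\|u\|_{SL^\infty}^2 = \sum_m c_m^2$. By linearity and the sign choice, the $J$-th Haar coefficient of $Tu$ equals $\sum_m c_m S_m^{(J)}$, and evaluating the square function of $Tu$ at any point of $J$ bounds this in absolute value by $\|Tu\|_{SL^\infty}\le \|T\|(\sum_m c_m^2)^{1/2}$. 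Since $(c_m)\in\ell^2$ was arbitrary, duality forces $\sum_m (S_m^{(J)})^2 \le \|T\|^2$, so in particular $S_m^{(J)}\to 0$. Plugging this back into the decomposition yields $\limsup_m \sup_c|\langle Tr_m^{(c)}, g\rangle|\le \varepsilon$ for every $\varepsilon>0$, completing the argument.
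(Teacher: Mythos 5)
Your proof is correct, but it takes a genuinely different route from the paper's. The paper handles both limits with one uniform trick: pick signs $\theta$ (resp.\ $\varepsilon$) attaining the suprema, observe that for any finite scalar sequence $(\omega_m)$ the function $\sum_m \omega_m r_m^{(\theta)}$ has $H^1$-norm (resp.\ $SL^\infty$-norm) exactly $(\sum_m\omega_m^2)^{1/2}$ because distinct levels occupy disjoint Haar coordinates, and then apply the duality estimate~\eqref{eq:bracket-estimate} with $\omega_m=\langle f,r_m^{(\theta)}\rangle$ (resp.\ $\omega_m=\langle Tr_m^{(\varepsilon)},g\rangle$) to conclude that the sequence of suprema is itself square-summable in $m$, hence null. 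You replace this for the first claim by dominated convergence applied to $\int_0^1|a_{I_m(x)}|\dif x$ (perfectly valid, and more elementary), and for the second claim you first approximate $g$ by a finite Haar sum in $H^1$ and reduce to single intervals $J$, after which your test function $u=\sum_m c_m\sum_{I\in\dint_m}\varepsilon_I^{(m)}h_I$ and the bound $\|u\|_{SL^\infty}=\|c\|_{\ell^2}$ is exactly the paper's mechanism, applied coefficient-wise to $S_m^{(J)}$ rather than directly to $\langle Tr_m^{(\varepsilon)},g\rangle$. What the paper's version buys is brevity and a slightly stronger conclusion (square-summability of the suprema themselves, uniformly in the approximation); what yours buys is a transparent reduction to the diagonal-type quantities $\langle Th_I,h_J\rangle$. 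One point to tighten: when you pass from $u$ with infinite $c\in\ell^2$ to ``the $J$-th Haar coefficient of $Tu$ equals $\sum_m c_m S_m^{(J)}$,'' you are interchanging $T$ with an infinite sum, which in the non-separable space $SL^\infty$ requires justification; here it is legitimate because the partial sums of $u$ converge in norm (the tail square function is $(\sum_{m>M}c_m^2)^{1/2}$), but the cleanest fix is to run the duality argument only with finitely supported $c$, which already yields $\sum_m (S_m^{(J)})^2\leq\|T\|^2$ and avoids the issue entirely.
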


\begin{proof}
  Let $f \in SL^\infty$ and $g\in H^1$.
  Note that there are sequences of scalars $\theta = (\theta_I : I\in \dint)$ and
  $\varepsilon = (\varepsilon_I : I\in \dint)$
  with $|\theta_I|=|\varepsilon_I|=1$, $I\in \dint$ such that
  \begin{equation}\label{proof:lem:bracket-convergence:1}
    \sup_{\|c\|_{\ell^\infty}\leq 1} |\langle f, r_m^{(c)}\rangle|
    = |\langle f, r_m^{(\theta)}\rangle|
    \qquad\text{and}\qquad
    \sup_{\|c\|_{\ell^\infty}\leq 1} |\langle T r_m^{(c)}, g\rangle|
    = |\langle T r_m^{(\varepsilon)}, g\rangle|.
  \end{equation}
  for all $m\in \mathbb N$.

  Now let $(\omega_m)_{m=1}^M$ denote a finite sequence of scalars and consider that
  by~\eqref{eq:bracket-estimate} we have
  \begin{equation*}
    \sum_{m=1}^M \omega_m \langle f, r_m^{(\theta)} \rangle
    \leq \|f\|_{SL^\infty} \big\| \sum_{m=1}^M \omega_m r_m^{(\theta)} \big\|_{H^1}
    \leq \|f\|_{SL^\infty} \big( \sum_{m=1}^M \omega_m^2 \big)^{1/2}.
  \end{equation*}
  Putting $\omega_m = \langle f, r_m^{(\theta)} \rangle$ gives
  \begin{equation*}
    \big( \sum_{m=1}^M |\langle f, r_m^{(\theta)} \rangle|^2 \big)^{1/2}
    \leq \|f\|_{SL^\infty}.
  \end{equation*}
  Combining the latter estimate with~\eqref{proof:lem:bracket-convergence:1} yields the first part
  of~\eqref{eq:bracket-convergence}.

  The argument for the second part is similar.
  By~\eqref{eq:bracket-estimate}, we obtain that
  \begin{equation*}
    \sum_{m=1}^M \omega_m \langle T r_m^{(\varepsilon)}, g \rangle
    \leq \|T\| \|g\|_{H^1} \big\| \sum_{m=1}^M \omega_m r_m^{(\varepsilon)} \big\|_{SL^\infty}
    \leq \|T\| \|g\|_{H^1} \big( \sum_{m=1}^M \omega_m^2 \big)^{1/2}.
  \end{equation*}
  By putting $\omega_m = \langle T r_m^{(\varepsilon)}, g \rangle$ we obtain
  \begin{equation*}
    \big( \sum_{m=1}^M |\langle T r_m^{(\varepsilon)}, g \rangle|^2 \big)^{1/2}
    \leq \|T\| \|g\|_{H^1},
  \end{equation*}
  which when combined with~\eqref{proof:lem:bracket-convergence:1} concludes the proof.
\end{proof}

Here we come to the crucial Lemma that enables infinite dimensional reasoning in the non-separable
Banach space $SL^\infty$.
\begin{lem}\label{lem:sieve}
  Let $\eta > 0$, $g\in H^1$ and let $\Gamma\subset \mathbb N$ be infinite.
  Suppose that $T : SL^\infty\to SL^\infty$ is a bounded linear operator.
  Then there exists an infinite set $\Lambda \subset \Gamma$ such that
  \begin{equation*}
    \sup_{\|f\|_{SL^\infty} \leq 1} |\langle T P_\Lambda f, g\rangle|
    \leq \eta \|g\|_{H^1},
  \end{equation*}
  where the norm one projection $P_\Lambda : SL^\infty\to SL^\infty$ is given by
  \begin{equation*}
    P_\Lambda (\sum_{I\in \dint} a_I h_I) = \sum_{m\in \Lambda} \sum_{I\in \dint_m} a_I h_I.
  \end{equation*}
\end{lem}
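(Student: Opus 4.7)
The plan is to apply Lemma~\ref{lem:bracket-convergence} to select $\Lambda$ so sparsely that the individual level contributions, tested against $g$, decay geometrically.

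For each $m \in \mathbb{N}$, set
\[
  \alpha_m = \sup_{\|c\|_{\ell^\infty} \leq 1}|\langle T r_m^{(c)}, g\rangle|.
\]
Lemma~\ref{lem:bracket-convergence} gives $\alpha_m \to 0$, so since $\Gamma$ is infinite I can inductively choose $m_1 < m_2 < \cdots$ in $\Gamma$ satisfying $\alpha_{m_k} \leq \eta 2^{-k}\|g\|_{H^1}$, and put $\Lambda = \{m_k : k \in \mathbb{N}\}$.

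Let $f = \sum_I a_I h_I$ satisfy $\|f\|_{SL^\infty} \leq 1$. From the definition of the $SL^\infty$-norm, $a_I^2 \leq \sum_{J \ni x}a_J^2 \leq \|f\|_{SL^\infty}^2 \leq 1$ for any $x \in I$, so the sequence $(a_I)_{I \in \dint_{m_k}}$ is admissible in the definition of $\alpha_{m_k}$; consequently $|\langle T r_{m_k}^{(a)}, g\rangle| \leq \alpha_{m_k} \leq \eta 2^{-k}\|g\|_{H^1}$. Writing $\Lambda_K = \{m_1, \ldots, m_K\}$, the truncation $P_{\Lambda_K}f = \sum_{k=1}^{K} r_{m_k}^{(a)}$ is a finite Haar combination on which $T$ acts by linearity, hence
\[
  |\langle T P_{\Lambda_K}f, g\rangle|
  = \Bigl|\sum_{k=1}^{K}\langle T r_{m_k}^{(a)}, g\rangle\Bigr|
  \leq \sum_{k=1}^{K}\alpha_{m_k}
  \leq \eta \|g\|_{H^1}.
\]

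The main obstacle is the passage $K \to \infty$: because $SL^\infty$ is non-separable, $P_{\Lambda_K}f$ need not converge to $P_\Lambda f$ in $SL^\infty$-norm, so continuity of the functional $h \mapsto \langle T h, g\rangle$ cannot be invoked directly. I would resolve this by first using \eqref{eq:bracket-estimate} to approximate $g$ by finite Haar combinations $g^N \to g$ in $H^1$, reducing the pairing with $g^N$ to a finite linear combination of Haar-coefficient functionals $\langle T P_\Lambda f, h_J\rangle$; each of these is a bounded functional of $P_\Lambda f$ which, via the Haar-matrix entries $\langle T h_I, h_J\rangle$ and the boundedness of $T$ on $SL^\infty$, matches the $K \to \infty$ limit of $\langle T P_{\Lambda_K}f, h_J\rangle$. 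This transfers the uniform bound $\eta \|g\|_{H^1}$ to $P_\Lambda f$, and taking the supremum over $\|f\|_{SL^\infty} \leq 1$ finishes the proof.
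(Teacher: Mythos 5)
Your argument is fine up to and including the uniform bound for the finite truncations $P_{\Lambda_K}f$: the coefficient bound $|a_I|\leq 1$, the admissibility of $(a_I)_{I\in\dint_{m_k}}$ in the definition of $\alpha_{m_k}$, and the geometric summation are all correct. The gap is the final limiting step $K\to\infty$, and it is not a technicality that can be patched the way you describe --- it is exactly the point where non-separability defeats a level-by-level argument. Your proposed fix asserts that $\langle T P_\Lambda f, h_J\rangle$ ``matches the $K\to\infty$ limit of $\langle T P_{\Lambda_K}f, h_J\rangle$'' via the matrix entries $\langle Th_I,h_J\rangle$. For a general bounded operator on $SL^\infty$ this is false: since the closed linear span of the Haar system is a proper subspace of $SL^\infty$, Hahn--Banach provides a nonzero functional $\phi$ vanishing on that closed span, and the rank-one operator $Tf=\phi(f)\,h_{[0,1)}$ is bounded, has all Haar matrix entries equal to zero, and satisfies $\langle TP_{\Lambda_K}f,h_J\rangle=0$ for every finite $K$ while $\langle TP_\Lambda f,h_J\rangle$ need not vanish. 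So the quantity you actually need to control, $\langle TP_\Lambda f,g\rangle$, is simply not determined by the finite truncations, and no approximation of $g$ by finite Haar combinations $g^N$ repairs this, since the failure is in the first argument of the pairing, not the second. (The paper itself flags this obstruction in the remark following the proof of Theorem~\ref{thm:factor}.)

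The paper's proof of Lemma~\ref{lem:sieve} sidesteps the issue entirely with a pigeonhole argument in the spirit of Lindenstrauss's proof that $\ell^\infty$ is prime: assume the conclusion fails for every infinite $\Lambda\subset\Gamma$, take $k\approx\|T\|^2/\eta^2$ pairwise disjoint infinite subsets $\Lambda_1,\dots,\Lambda_k$ of $\Gamma$ with norm-one witnesses $f_j$ satisfying $\langle TP_{\Lambda_j}f_j,g\rangle>\eta\|g\|_{H^1}$, and observe that disjointness of the levels makes the square functions add orthogonally, so $\|\sum_j P_{\Lambda_j}f_j\|_{SL^\infty}\leq k^{1/2}$; combined with \eqref{eq:bracket-estimate} this contradicts the sum of the lower bounds once $k>\|T\|^2/\eta^2$. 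Note that this argument never expands $T$ along the Haar system and does not use Lemma~\ref{lem:bracket-convergence} at all. If you want to salvage your strategy, you should replace the limiting step by this kind of global, contradiction-based counting argument.
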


\begin{proof}[Proof of Lemma~\ref{lem:sieve}]
  Let $\eta > 0$ and $g\in H^1$ and assume that $\Gamma = \mathbb N$.
  Suppose the conclusion of the Lemma is false.
  Define $k = \lceil \frac{\|T\|^2}{\eta^2} \rceil$ and choose infinite, disjoint sets
  $\Lambda_1,\Lambda_2,\dots,\Lambda_k$.
  By our assumption, we can find $f_1,f_2,\dots,f_k \in SL^\infty$ with
  $\|f_j\|_{SL^\infty} = 1$, $1\leq j\leq k$ such that
  \begin{equation*}
    \langle T P_{\Lambda_j} f_j, g\rangle
    > \eta \|g\|_{H^1},
    \qquad\text{for all $1\leq j \leq k$}.
  \end{equation*}
  Summing these estimates and using~\eqref{eq:bracket-estimate} yields
  \begin{equation}\label{proof:lem:sieve:1}
    \|g\|_{H^1} \|T\| \big\|\sum_{j=1}^k P_{\Lambda_j} f_j\big\|_{SL^\infty}
    > k \eta \|g\|_{H^1}.
  \end{equation}
  Since the $\Lambda_j$, $1\leq j\leq k$ are disjoint, we have that
  \begin{equation*}
    \sqfun \Big( \sum_{j=1}^k P_{\Lambda_j} f_j \Big)
    = \Big( \sum_{j=1}^k \sqfun (P_{\Lambda_j} f_j)^2 \Big)^{1/2},
  \end{equation*}
  and therefore we obtain
  \begin{equation}\label{proof:lem:sieve:2}
    \big\|\sum_{j=1}^k P_{\Lambda_j} f_j\big\|_{SL^\infty}
    \leq \Big( \sum_{j=1}^k \| P_{\Lambda_j} f_j\|_{SL^\infty}^2 \Big)^{1/2}
    \leq k^{1/2}.
  \end{equation}
  By combining the estimates~\eqref{proof:lem:sieve:1} and~\eqref{proof:lem:sieve:2}, we reach a
  contradiction.
\end{proof}

\subsection{Diagonalization of operators on \bm{$SL^\infty$}}\label{subsec:diagonalization}\hfill\\
\noindent
We will show that any given operator $T$ acting on $SL^\infty$ with large diagonal can be
almost-diagonalized by a block basis of the Haar system $(b_I : I\in \dint)$, that spans a
complemented copy of $SL^\infty$ (see Theorem~\ref{thm:quasi-diag}).
This is achieved by constructing $(b_I : I\in \dint)$ with aid from the results in
Section~\ref{subsec:preparation}, so that Jones' compatibility conditions~\textref[J]{enu:j1}--\textref[J]{enu:j4} are satisfied.

From here on, we will regularly identify a dyadic interval $I \in \dint$ with its natural ordering
number $\diindex(I)$ given by
\begin{equation*}
  \diindex(I) = 2^n - 1 + k,
  \qquad\text{if $I=[k2^{-n}, (k+1)2^{-n}]$}.
\end{equation*}
To be precise, for $\diindex(I) = i$ we identify
\begin{equation*}
  \mathscr B_I = \mathscr B_i
  \qquad\text{and}\qquad
  b_I^{(\varepsilon)} = b_i^{(\varepsilon)}.
\end{equation*}
The block basis $(b_I^{(\varepsilon)} : I\in \dint)$ is defined in~\eqref{rem:projection}.
See also below.

\begin{thm}\label{thm:quasi-diag}
  Let $\delta \geq 0$ and let $T : SL^\infty\to SL^\infty$ be an operator satisfying
  \begin{equation*}
    |\langle T h_I, h_I \rangle| \geq \delta |I|,
    \qquad I\in \dint.
  \end{equation*}
  Then for any $\eta > 0$, there exists a sequence of collections $(\mathscr B_I : I\in \dint)$
  and a sequence of signs
  $\varepsilon = (\varepsilon_K : \varepsilon_K \in \{\pm 1\},\ K\in \dint)$ which generate the
  block basis of the Haar system $(b_I^{(\varepsilon)} : I\in \dint)$ given by
  \begin{equation*}
    b_I^{(\varepsilon)} = \sum_{K \in \mathscr B_I} \varepsilon_K h_K,
    \qquad I\in \dint,
  \end{equation*}
  so that the following conditions are satisfied:
  \begin{enumerate}[(i)]
  \item $(\mathscr B_I : I\in \dint)$ satisfies Jones' compatibility
    conditions~\textref[J]{enu:j1}--\textref[J]{enu:j4} with constant $\kappa_J = 1$.
    \label{enu:thm:quasi-diag-i}

  \item $(b_I^{(\varepsilon)} : I\in \dint)$ almost--diagonalizes $T$ so that $T$ has
    large diagonal with respect to $(b_I^{(\varepsilon)} : I\in \dint)$.
    To be more precise, for any $i\in \mathbb N_0$ we have the estimates
    \begin{subequations}\label{eq:thm:quasi-diag-ii}
      \begin{align}
        \sum_{j=0}^{i-1} |\langle T b_j^{(\varepsilon)}, b_i^{(\varepsilon)}\rangle|
        & \leq \eta 4^{-i} \|b_i^{(\varepsilon)}\|_2^2,
        \label{eq:thm:quasi-diag-ii:a}\\
        \langle T b_i^{(\varepsilon)}, b_i^{(\varepsilon)} \rangle
        & \geq \delta\|b_i^{(\varepsilon)}\|_2^2,
          \label{eq:thm:quasi-diag-ii:b}\\
        \sup \Big\{ |\langle T g , b_i^{(\varepsilon)}\rangle| :
          g = \sum_{j=i+1}^\infty a_j b_j^{(\varepsilon)},\ \|g\|_{SL^\infty}\leq 1
        \Big\}
        & \leq \eta 4^{-i} \|b_i^{(\varepsilon)}\|_2^2.
          \label{eq:thm:quasi-diag-ii:c}
      \end{align}
    \end{subequations}
    \label{enu:thm:quasi-diag-ii}
  \end{enumerate}
\end{thm}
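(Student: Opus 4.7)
The plan is to build the collections $(\mathscr B_I)_{I \in \dint}$ and the signs $\varepsilon$ by recursion on $i = \diindex(I)$, maintaining along the way a decreasing chain $\mathbb N = \Lambda_{-1} \supset \Lambda_0 \supset \Lambda_1 \supset \cdots$ of infinite sets of allowed Haar scales, with the invariant that every $\mathscr B_{I_j}$ with $j > i$ is supported on dyadic intervals of scale in $\Lambda_i$. An initial pigeonhole on $\mathbb{N}$, combined with a sign flip absorbed into $\varepsilon$, will allow me to assume without loss that within $\Lambda_{-1}$ the ``positive-diagonal'' class $\dint_m^+ = \{K \in \dint_m : \langle Th_K, h_K\rangle \geq \delta|K|\}$ has positive density inside every relevant dyadic region, so that every $K$ ever placed into some $\mathscr B_I$ will come from $\dint_{m}^+$.

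At the inductive step, given $\mathscr B_{I_0}, \ldots, \mathscr B_{I_{i-1}}$, the signs so far, and $\Lambda_{i-1}$, I let $I_p$ denote the dyadic parent of $I_i$ and pick a scale $m_i \in \Lambda_{i-1}$ so large that Lemma~\ref{lem:bracket-convergence} (first part, applied with the $SL^\infty$ function $f = T b_j^{(\varepsilon)}$) forces $\sup_{\|c\|_\infty \leq 1}|\langle T b_j^{(\varepsilon)}, r_{m_i}^{(c)}\rangle| \leq \eta 4^{-i}\|b_i^{(\varepsilon)}\|_2^2/i$ for every $j < i$. I then define $\mathscr B_{I_i}$ by picking \emph{the same} combinatorial pattern of positive-class dyadic level-$m_i$ subintervals inside every $N \in \mathscr B_{I_p}$, all lying in the left (respectively right) half of $N$ according to whether $I_i$ is the left or right child of $I_p$. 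The uniform treatment of the pieces $N$ forces \textref[J]{enu:j4} with equality, giving $\kappa_J = 1$; \textref[J]{enu:j1}--\textref[J]{enu:j3} are inherited from the previous stage together with the nestedness of the dyadic halves. Because $b_i^{(\varepsilon)}$ is then a partial Rademacher at level $m_i$ with unit $\ell^\infty$-norm coefficients, summing the above bound over $j<i$ yields~\eqref{eq:thm:quasi-diag-ii:a}, and~\eqref{eq:thm:quasi-diag-ii:b} follows from the positive-class restriction via $\sum_{K \in \mathscr B_{I_i}} \langle Th_K, h_K\rangle \geq \delta\|b_i^{(\varepsilon)}\|_2^2$ for the diagonal part.

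For~\eqref{eq:thm:quasi-diag-ii:c}, once $b_i^{(\varepsilon)}$ is defined I will invoke Lemma~\ref{lem:sieve} with $g = b_i^{(\varepsilon)} \in H^1$---observing that $\|b_i^{(\varepsilon)}\|_{H^1} = \|b_i^{(\varepsilon)}\|_2^2$ because $b_i^{(\varepsilon)}$ is a disjoint $\pm$-sum of Haar atoms---and tolerance $\eta 4^{-i}$. This produces an infinite $\Lambda_i \subset \Lambda_{i-1}$ with $\sup_{\|f\|_{SL^\infty} \leq 1}|\langle T P_{\Lambda_i} f, b_i^{(\varepsilon)}\rangle| \leq \eta 4^{-i}\|b_i^{(\varepsilon)}\|_2^2$; any $g = \sum_{j > i} a_j b_j^{(\varepsilon)}$ with $\|g\|_{SL^\infty} \leq 1$ then satisfies $g = P_{\Lambda_i}g$ by the level invariant, which gives~\eqref{eq:thm:quasi-diag-ii:c}.

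The hardest part will be the off-diagonal cross-terms $\sum_{K \neq K' \in \mathscr B_{I_i}} \varepsilon_K \varepsilon_{K'}\langle T h_K, h_{K'}\rangle$ \emph{inside} the single new block $b_i^{(\varepsilon)}$, since both indices now sit at the same fresh scale $m_i$, so neither Lemma~\ref{lem:bracket-convergence} nor Lemma~\ref{lem:sieve} applies directly to them. My plan is to carry out the choice of $\mathscr B_{I_i}$ itself as a further sub-recursion, adding the subintervals inside each $N$ one at a time at strictly increasing sub-scales $m_i^{(1)} < m_i^{(2)} < \cdots$, so that after $K_1, \ldots, K_l$ have been fixed, the next interval $K_{l+1}$ is picked at a scale high enough that Lemma~\ref{lem:bracket-convergence} (second part, applied with the fixed $H^1$ functions $g = h_{K_s}$, $s \leq l$) makes every new cross-term geometrically smaller than the previous. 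The total off-diagonal contribution is then a convergent geometric series, absorbable into the slack provided by the positive-class restriction together with a sufficiently small choice of $\eta$, yielding~\eqref{eq:thm:quasi-diag-ii:b}.
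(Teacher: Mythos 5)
Your overall architecture matches the paper's: induct along the tree order $\diindex$, build each $\mathscr B_I$ as a Gamlen--Gaudet high-frequency cover of the appropriate half of $B_{\widetilde I}$, kill the terms against the already-constructed blocks by pushing the scale up via Lemma~\ref{lem:bracket-convergence}, and kill the terms against all future blocks by extracting $\Lambda_{i}\subset\Lambda_{i-1}$ with Lemma~\ref{lem:sieve} and using $g=P_{\Lambda_i}g$. Up to that point your sketch is essentially the paper's proof. The genuine gap is in your last paragraph, i.e.\ exactly at the point you flag as hardest: the off-diagonal terms $\sum_{K\neq K'}\varepsilon_K\varepsilon_{K'}\langle Th_K,h_{K'}\rangle$ \emph{inside} the new block. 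The paper handles these by a one-line random-signs argument: writing $Th_K=\alpha_Kh_K+r_K$ with $\alpha_K\geq\delta$, the cross-term $X(\varepsilon)$ has $\cond_\varepsilon X=0$, so some deterministic choice of $\varepsilon$ gives $X(\varepsilon)\geq 0$ and hence \eqref{eq:thm:quasi-diag-ii:b} with the \emph{exact} constant $\delta$. Your plan to make the cross-terms merely \emph{small} cannot deliver \eqref{eq:thm:quasi-diag-ii:b}: the hypothesis gives only $\langle Th_K,h_K\rangle\geq\delta|K|$, with equality possible, so there is no ``slack provided by the positive-class restriction'' to absorb a small negative cross-term, and $\eta$ does not appear in \eqref{eq:thm:quasi-diag-ii:b}, so shrinking $\eta$ does not help. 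At best you would get $\langle Tb_i^{(\varepsilon)},b_i^{(\varepsilon)}\rangle\geq(\delta-\epsilon)\|b_i^{(\varepsilon)}\|_2^2$, which is a weaker theorem.

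Moreover, the multi-scale sub-recursion you propose to achieve even that weaker bound undermines the rest of your argument. Once the intervals of $\mathscr B_{I_i}$ sit at strictly increasing scales $m_i^{(1)}<m_i^{(2)}<\cdots$, the block is no longer a partial Rademacher function at a single level, so your derivation of \eqref{eq:thm:quasi-diag-ii:a} from Lemma~\ref{lem:bracket-convergence} no longer applies as stated; worse, $\|b_i^{(\varepsilon)}\|_2^2$ is then dominated by $2^{-m_i^{(1)}}$ (a strictly increasing sequence of scales has geometrically summable total measure), so the target $\eta4^{-i}\|b_i^{(\varepsilon)}\|_2^2$ on the right of \eqref{eq:thm:quasi-diag-ii:a} shrinks geometrically in the very scale you are sending to infinity, while Lemma~\ref{lem:bracket-convergence} provides convergence to $0$ with \emph{no rate}. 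The paper avoids this trap because $\mathscr F_m$ is the \emph{full} level-$m$ cover of $B_{\widetilde I}^\ell$ (resp.\ $B_{\widetilde I}^r$), so that $\|f_m^{(\varepsilon)}\|_2^2=|I|$ is independent of $m$ and also \textref[J]{enu:j4} holds with $\kappa_J=1$ for free. So: keep your single-level, full-cover construction from the first three paragraphs, and replace the final paragraph by the expectation-over-signs argument.
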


\subsection*{Proof of Theorem~\ref{thm:quasi-diag}}\hfill\\
\noindent
Let $\delta \geq 0$, $\eta > 0$ and $T : SL^\infty\to SL^\infty$.
Before we begin with the actual proof, observe that by $1$-unconditionality, we can assume that
\begin{equation*}
\langle T h_I, h_I \rangle \geq \delta |I|,\qquad I\in \dint.
\end{equation*}
Given $I \in \dint$, we write
\begin{subequations}\label{eq:decomp}
  \begin{equation}
    T h_I = \alpha_I h_I + r_I,
  \end{equation}
  where
  \begin{equation}
    \alpha_I = \frac{\langle T h_I, h_I \rangle}{|I|}
    \quad\text{and}\quad
    r_I = \sum_{J\neq I}
    \frac{\langle T h_I, h_J \rangle}{|J|} h_J.
  \end{equation}
\end{subequations}
We note the estimate
\begin{equation}\label{eq:a-estimate}
  \delta \leq \alpha_I \leq \|T\|.
\end{equation}

\subsubsection*{Inductive construction of $(b_I^{(\varepsilon)} : I\in \dint)$}\hfill\\
\noindent
To begin the induction, we simply put
\begin{equation*}
  \mathscr B_0 = \mathscr B_{[0,1)} = \{[0,1)\}
  \qquad\text{and}\qquad
  b_0^{(\varepsilon)} = b_{[0,1)}^{(\varepsilon)} = h_{[0,1)}.
\end{equation*}
We complete the initial step of our construction, by choosing $\Lambda_1 \subset \mathbb N$
according to Lemma~\ref{lem:sieve} such that
\begin{equation*}
  \sup_{\|f\|_{SL^\infty}\leq 1} |\langle T P_{\Lambda_1} f, b_0^{(\varepsilon)}\rangle|
  \leq \eta \|b_0^{(\varepsilon)}\|_2^2.
\end{equation*}

For the inductive step, let us now assume that we have already
\begin{itemize}
\item chosen a strictly increasing sequence of integers $(m_j)$ and infinite index sets
  $\Lambda_1 \supset\dots\supset \Lambda_i$ with $m_j\in \Lambda_j\setminus \Lambda_{j+1}$,
  $1\leq j\leq i-1$,

\item constructed finite collections $\mathscr B_j$ with $\mathscr B_j \subset \dint_{m_j}$,
  $0\leq j \leq i-1$,

\item made a suitable choice of signs
  $\varepsilon = (\varepsilon_K : \varepsilon_K \in \{\pm 1\}, K\in \bigcup_{j < i}\mathscr B_j)$,

\item and the block basis elements $b_j^{(\varepsilon)}$ have the form
  \begin{equation*}
    b_j^{(\varepsilon)} = \sum_{K\in \mathscr B_j} \varepsilon_K h_K,
    \qquad 0\leq j\leq i-1.
  \end{equation*}
\end{itemize}

We will now choose an integer $m_i \in \Lambda_i$ with $m_i > m_{i-1}$, construct a finite
collection $\mathscr B_i \subset \dint_{m_i}$, choose signs
$\varepsilon = (\varepsilon_K : \varepsilon_K \in \{\pm 1\}, K\in \mathscr B_i)$,
select an infinite subset $\Lambda_{i+1}\subset \Lambda_i\setminus\{m_i\}$ and define
$b_i^{(\varepsilon)}$ by
\begin{subequations}\label{eq:induction-properties}
  \begin{align}
    b_i^{(\varepsilon)}
    & = \sum_{K\in \mathscr B_i} \varepsilon_K h_K,
      \label{eq:induction-properties:a}
  \end{align}
  such that the operator $T$ is almost-diagonalized by the block basis $(b_i^{(\varepsilon)})$
  while preserving the large diagonal.
  To be precise:
  \begin{align}
    \sum_{j=0}^{i-1} |\langle T b_j^{(\varepsilon)}, b_i^{(\varepsilon)}\rangle|
    & \leq \eta 4^{-i} \|b_i^{(\varepsilon)}\|_2^2,
      \label{eq:induction-properties:c}\\
    \langle T b_i^{(\varepsilon)}, b_i^{(\varepsilon)} \rangle
    & \geq \delta\|b_i^{(\varepsilon)}\|_2^2,
      \label{eq:induction-properties:d}\\
    \sup_{\|f\|_{SL^\infty}\leq 1} |\langle T P_{\Lambda_{i+1}} f, b_i^{(\varepsilon)}\rangle|
    & \leq \eta 4^{-i} \|b_i^{(\varepsilon)}\|_2^2.
      \label{eq:induction-properties:e}
  \end{align}
\end{subequations}
For a definition of the projection $P_\Lambda$ see Lemma~\ref{lem:sieve}.
For the most part of this inductive construction step, we will assume that
$\Lambda_i = \mathbb N$.

Now, let $I\in \dint$ be such that $\diindex(I) = i$.
The dyadic interval $\widetilde I$ denotes the unique dyadic interval such that
$\widetilde I \supset I$ and $|\widetilde I| = 2 |I|$.
Furthermore, for every dyadic interval $K_0$, we denote its left half by $K_0^\ell$ and its right
half by $K_0^r$.
Following the construction of Gamlen-Gaudet~\cite{gamlen:gaudet:1973}, we define the sets
\begin{equation*}
  B_{\widetilde I}^\ell
  = \bigcup\{ K_0^\ell : K_0\in \mathscr B_{\widetilde I} \}
  \quad\text{and}\quad
  B_{\widetilde I}^r
  = \bigcup \{ K_0^r : K_0\in \mathscr B_{\widetilde I} \}.
\end{equation*}
If $I$ is the \emph{left half} of $\widetilde I$ we put
\begin{subequations}\label{eq:intervals-case_1}
  \begin{equation}\label{eq:intervals-case_1:a}
    \mathscr F_m
    = \{K\in \dint : |K| = 2^{-m},\, K \subset B_{\widetilde I}^\ell \}.
  \end{equation}
  If $I$ is the \emph{right half} of $\widetilde I$ we define
  \begin{equation}\label{eq:intervals-case_1:b}
    \mathscr F_m
    = \{K\in \dint : |K| = 2^{-m},\, K \subset B_{\widetilde I}^r \}.
  \end{equation}
  See Figure~\ref{fig:gamlen-gaudet} for a depiction of $\mathscr F_m$.
  \begin{figure}[bt]
    \begin{center}
      \includegraphics{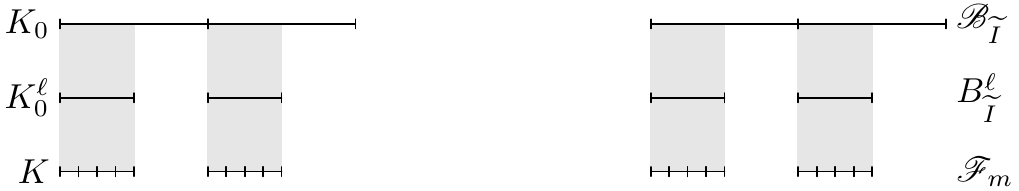}
    \end{center}
    \caption{The picture shows the construction of $\mathscr F_m$, if $I$ is the left half of
      $\widetilde I$.
      The large dyadic intervals $K_0$ on top form the set $\mathscr B_{\widetilde I}$.
      The medium sized dyadic intervals $K_0^\ell$ denote the left half of the $K_0$, and the
      set $B_{\widetilde I}^\ell$ is the union of the $K_0^\ell$.
      The small intervals $K$ at the bottom form the high-frequency cover $\mathscr F_m$ of the
      set $B_{\widetilde I}^\ell$.
    }
    \label{fig:gamlen-gaudet}
  \end{figure}
\end{subequations}
In either of the cases~\eqref{eq:intervals-case_1:a} and \eqref{eq:intervals-case_1:b} we put
\begin{equation}\label{eq:block-basis-candidate}
  f_m^{(\varepsilon)} = \sum_{K\in \mathscr F_m} \varepsilon_K h_K,
\end{equation}
for all $m\in \mathbb N$ and
$\varepsilon = (\varepsilon_K : \varepsilon_K \in \{\pm 1\}, K\in \mathscr F_m)$.

\subsubsection*{Choosing the frequency $m_i$}\hfill\\
\noindent
Note that by~\eqref{eq:intervals-case_1} and our induction hypothesis we have
$\mathscr F_m \cap \mathscr B_j = \emptyset$, $0\leq j \leq i-1$, $m\in \mathbb N$.
In particular, the sequence $(\varepsilon_K : \varepsilon_K\in \{\pm 1\},\ K\in \mathscr F_m)$ does
not interfere with any of the previous definitions of $b_j^{(\varepsilon)}$, $0 \leq j \leq i-1$.
By Lemma~\ref{lem:bracket-convergence}, we have that
\begin{equation*}
  \lim_{m\to \infty} \sup \big\{ |\langle f, f_m^{(\varepsilon)}\rangle| :
  \varepsilon = (\varepsilon_K : \varepsilon_K\in \{\pm 1\},\ K\in \mathscr F_m) \big\} = 0,
  \qquad f\in SL^\infty.
\end{equation*}
consequently, we obtain the estimate
\begin{equation}\label{eq:induction-step:a}
  \sup \Big\{
  \sum_{j=0}^{i-1} |\langle T b_j^{(\varepsilon)}, f_{m_i}^{(\varepsilon)}\rangle| :
  \varepsilon = (\varepsilon_K : \varepsilon_K\in \{\pm 1\},\ K\in \mathscr F_{m_i})
  \Big\}
  \leq \eta 4^{-i} \|f_{m_i}^{(\varepsilon)}\|_2^2,
\end{equation}
for sufficiently large ${m_i}$.
Certainly, we choose $m_i$ large enough so that $\mathscr F_{m_i}\neq \emptyset$,
see~\eqref{eq:intervals-case_1}.
Note that $\|f_{m_i}^{(\varepsilon)}\|_2^2 = |I|$.

\subsubsection*{Choosing the signs $\varepsilon$}\hfill\\
\noindent
Continuing with the proof, we obtain from~\eqref{eq:decomp} that
\begin{equation}\label{eq:block-basis-candidate:identity}
  T f_{m_i}^{(\varepsilon)}
  = \sum_{K\in \mathscr F_{m_i}} \varepsilon_K \alpha_K h_K + R_{m_i}^{(\varepsilon)},
\end{equation}
where
\begin{equation}\label{eq:block-basis-candidate:rest}
  R_{m_i}^{(\varepsilon)} = \sum_{K\in \mathscr F_{m_i}} \varepsilon_K r_K.
\end{equation}
For all $\varepsilon = (\varepsilon_K :  \varepsilon_K\in \{\pm 1\},\ K\in \mathscr F_{m_i})$, we
define
\begin{equation*}
  X_{m_i}(\varepsilon) = \langle R_{m_i}^{(\varepsilon)}, f_{m_i}^{(\varepsilon)} \rangle.
\end{equation*}
From~\eqref{eq:block-basis-candidate:identity} and~\eqref{eq:a-estimate} follows that
\begin{equation}\label{eq:diagonal-estimate:1}
  \langle T f_{m_i}^{(\varepsilon)}, f_{m_i}^{(\varepsilon)}\rangle
  \geq \delta \|f_{m_i}^{(\varepsilon)}\|_2^2 + X_{m_i}(\varepsilon).
\end{equation}
By~\eqref{eq:decomp} we have that $\langle r_K, h_K \rangle = 0$, hence
\begin{equation*}
  X_{m_i}(\varepsilon)
  = \sum_{\substack{K_0, K_1\in \mathscr F_{m_i}\\K_0\neq K_1}} \varepsilon_{K_0} \varepsilon_{K_1}
  \langle r_{K_0}, h_{K_1} \rangle.
\end{equation*}
Now, let $\cond_\varepsilon$ denote the averaging over all possible choices of signs
$\varepsilon = (\varepsilon_K : \varepsilon_K\in \{\pm 1\},\ K \in \mathscr F_{m_i})$.
If $K_0\neq K_1$, then $\cond_\varepsilon \varepsilon_{K_0} \varepsilon_{K_1} = 0$ and therefore
\begin{equation*}
  \cond_\varepsilon X_{m_i} = 0.
\end{equation*}
Taking the expectation $\cond_\varepsilon$ in~\eqref{eq:diagonal-estimate:1} and considering the
above identity, we obtain
\begin{equation}\label{eq:diagonal-estimate:2}
  \cond_\varepsilon \langle T f_{m_i}^{(\varepsilon)}, f_{m_i}^{(\varepsilon)}\rangle
  \geq \delta \|f_{m_i}^{(\varepsilon)}\|_2^2.
\end{equation}
The expectation on the right hand side is not present since
$\|f_{m_i}^{(\varepsilon)}\|_2^2 = |I|$ for all choices of $\varepsilon$.
By~\eqref{eq:diagonal-estimate:2} we can find an
$\varepsilon = (\varepsilon_K : \varepsilon_K\in \{\pm 1\},\ K\in \mathscr F_{m_i})$
such that
\begin{equation}\label{eq:diagonal_estimate}
  \langle T f_{m_i}^{(\varepsilon)}, f_{m_i}^{(\varepsilon)} \rangle
  \geq \delta \|f_{m_i}^{(\varepsilon)}\|_2^2.
\end{equation}

\subsubsection*{Choosing the set $\Lambda_{i+1}$}\hfill\\
\noindent
The next step is to find an infinite set $\Lambda_{i+1} \subset \Lambda_i\setminus\{m_i\}$ such
that~\eqref{eq:induction-properties:e} is satisfied.
To this end, we apply Lemma~\ref{lem:sieve}, to the infinite set $\Gamma$ given by
\begin{equation*}
  \Gamma = \{ n\in \Lambda_i : n > m_i \} \subset \Lambda_i.
\end{equation*}
Thus, we obtain $\Lambda_{i+1}\subset \Gamma$ such that
\begin{equation}\label{eq:small-future-estimate}
  \sup_{\|f\|_{SL^\infty}\leq 1} | \langle T P_{\Lambda_{i+1}} f, f_{m_i}^{(\varepsilon)}\rangle |
  \leq \eta 4^{-i} \|f_{m_i}^{(\varepsilon)}\|_2^2.
\end{equation}
We conclude the inductive construction step by defining
\begin{equation}\label{eq:induction-step:block_basis}
  \mathscr B_i = \mathscr B_I = \mathscr F_{m_i}
  \qquad\text{and}\qquad
  b_i^{(\varepsilon)} = b_I^{(\varepsilon)} = f_{m_i}^{(\varepsilon)}.
\end{equation}

\subsubsection*{Conclusion}\hfill\\
We remark that we chose ${m_i}$ and $\varepsilon$ according to~\eqref{eq:induction-step:a}
and~\eqref{eq:diagonal_estimate}, which together with~\eqref{eq:small-future-estimate} shows
Theorem~\ref{thm:quasi-diag}~\eqref{enu:thm:quasi-diag-ii}.  It follows immediately from the
Gamlen-Gaudet construction~\cite{gamlen:gaudet:1973} of $(\mathscr B_I : I\in \dint)$ that the collection
$(\mathscr B_I : I \in \dint)$ satisfies Jones' compatibility conditions~\textref[J]{enu:j1}--\textref[J]{enu:j4} with $\kappa_J=1$.\hfill\qedsymbol

\subsection{Factorization in \bm{$SL^\infty$} -- Proof of Theorem~\ref{thm:factor}}\label{subsec:proof:factor}\hfill\\
\noindent
We use the almost-diagonalization result in Section~\ref{subsec:diagonalization} to prove the main result
Theorem~\ref{thm:factor}.

Let $\delta, \eta > 0$, and let $T : SL^\infty\to SL^\infty$ be an operator satisfying
\begin{equation*}
  |\langle T h_I, h_I \rangle| \geq \delta |I|,
  \qquad I\in \dint.
\end{equation*}
Let $\eta' = \eta(\delta,\eta)$ denote a constant so that
\begin{equation}\label{eq:proof:factor:eta'}
  \frac{4\eta'}{\delta} < 1
  \qquad\text{and}\qquad
  \frac{1}{1 - \frac{4\eta'}{\delta}} \leq 1 + \eta.
\end{equation}
By Theorem~\ref{thm:quasi-diag}, we obtain a sequence of collections $(\mathscr B_I : I\in \dint)$
and a sequence of signs
$\varepsilon = (\varepsilon_K : \varepsilon_K \in \{\pm 1\},\ K\in \dint)$ which generate the block
basis of the Haar system $(b_I^{(\varepsilon)} : I\in \dint)$ given by
\begin{equation*}
  b_I^{(\varepsilon)} = \sum_{K \in \mathscr B_I} \varepsilon_K h_K,
  \qquad I\in \dint,
\end{equation*}
so that the following conditions are satisfied:
\begin{enumerate}[(i)]
\item $(\mathscr B_I : I\in \dint)$ satisfies Jones' compatibility
  conditions~\textref[J]{enu:j1}--\textref[J]{enu:j4} with constant $\kappa_J = 1$.
  \label{enu:proof:factor:quasi-diag-i}

\item For all $i\in \mathbb N_0$ we have the estimates
  \begin{subequations}\label{eq:proof:factor::quasi-diag-ii}
    \begin{align}
      \sum_{j=0}^{i-1} |\langle T b_j^{(\varepsilon)}, b_i^{(\varepsilon)}\rangle|
      & \leq \eta' 4^{-i} \|b_i^{(\varepsilon)}\|_2^2,
        \label{eq:proof:factor:quasi-diag-ii:a}\\
      \langle T b_i^{(\varepsilon)}, b_i^{(\varepsilon)} \rangle
      & \geq \delta\|b_i^{(\varepsilon)}\|_2^2,
        \label{eq:proof:factor:quasi-diag-ii:b}\\
      \sup \Big\{ |\langle T g , b_i^{(\varepsilon)}\rangle| :
      g = \sum_{j=i+1}^\infty a_j b_j^{(\varepsilon)},\ \|g\|_{SL^\infty}\leq 1
      \Big\}
      & \leq \eta' 4^{-i} \|b_i^{(\varepsilon)}\|_2^2.
        \label{eq:proof:factor:quasi-diag-ii:c}
    \end{align}
  \end{subequations}
  \label{enu:proof:factor:quasi-diag-ii}
\end{enumerate}
Since $(\mathscr B_I : I\in \dint)$ satisfies Jones' compatibility
conditions~\textref[J]{enu:j1}--\textref[J]{enu:j4} with $\kappa_J=1$, Remark~\ref{rem:projection} and
Theorem~\ref{thm:projection} imply that the operators
\begin{equation}\label{eq:B+Q:operators}
  B^{(\varepsilon)} f
  = \sum_{I\in \mathscr I} \frac{\langle f, h_I\rangle}{\|h_I\|_2^2} b_I^{(\varepsilon)}
  \qquad\text{and}\qquad
  Q^{(\varepsilon)} f
  = \sum_{I\in \mathscr I}
  \frac{\langle f, b_I^{(\varepsilon)}\rangle}{\|b_I^{(\varepsilon)}\|_2^2} h_I
\end{equation}
satisfy the estimates
\begin{equation}\label{eq:B+Q:estimates}
  \|B^{(\varepsilon)} f \|_{SL^\infty} \leq \|f\|_{SL^\infty}
  \quad\text{and}\quad
  \|Q^{(\varepsilon)} f \|_{SL^\infty} \leq \|f\|_{SL^\infty}.
\end{equation}
By~\eqref{eq:B+Q:estimates}, the operator $P^{(\varepsilon)} : SL^\infty\to SL^\infty$ defined as
$P^{(\varepsilon)} = B^{(\varepsilon)}Q^{(\varepsilon)}$, is given by
\begin{equation}\label{eq:P:definition}
  P^{(\varepsilon)} f
  = \sum_{I\in \mathscr I}
  \frac{\langle f, b_I^{(\varepsilon)}\rangle}{\|b_I^{(\varepsilon)}\|_2^2} b_I^{(\varepsilon)},
  \qquad f\in SL^\infty.
\end{equation}
Therefore, $P^{(\varepsilon)}$ is an orthogonal projection with the estimate
\begin{equation}\label{eq:P:estimate}
  \|P^{(\varepsilon)} f\|_{SL^\infty} \leq \|f\|_{SL^\infty},
  \qquad f\in SL^\infty.
\end{equation}

Let $Y$ denote the subspace of $SL^\infty$ given by
\begin{equation*}
  Y = \Big\{g = \sum_{i=0}^\infty a_i b_i^{(\varepsilon)} :
  a_i\in \mathbb R, \|g\|_{SL^\infty} < \infty\Big\}.
\end{equation*}
Note the following commutative diagram:
\begin{equation}\label{eq:commutative-diagram:preimage}
  \vcxymatrix{SL^\infty \ar[r]^\Id \ar[d]_{B^{(\varepsilon)}} & SL^\infty\\
    Y \ar[r]_\Id & Y \ar[u]_{{B^{(\varepsilon)}}^{-1}}}
  \qquad \|B^{(\varepsilon)}\|,\|{B^{(\varepsilon)}}^{-1}\| \leq 1.
\end{equation}
The estimates for $\|B^{(\varepsilon)}\|,\|{B^{(\varepsilon)}}^{-1}\|$ follow from~\eqref{eq:B+Q:estimates}.
Now, define $U : SL^\infty\to Y$ by
\begin{equation}\label{eq:almost-inverse}
  U f = \sum_{i=0}^\infty
  \frac{\langle f, b_i^{(\varepsilon)}\rangle}
  {\langle Tb_i^{(\varepsilon)}, b_i^{(\varepsilon)}\rangle}
  b_i^{(\varepsilon)},
\end{equation}
and note that by~\eqref{eq:induction-properties:d}, the $1$-unconditionality of the Haar system in
$SL^\infty$ and~\eqref{eq:P:estimate}, the operator $U$ has the upper bound
\begin{equation}\label{eq:almost-inverse:bounded}
  \|U : SL^\infty\to Y\|_{SL^\infty}
  \leq \frac{1}{\delta}.
\end{equation}
Observe that for all $g = \sum_{i=0}^\infty a_i b_i^{(\varepsilon)} \in Y$ the following identity
is true:
\begin{equation}\label{eq:crucial-identity}
  UTg - g
  = \sum_{i=0}^\infty \Big(
  \sum_{j : j < i} a_j
  \frac{\langle T b_j^{(\varepsilon)}, b_i^{(\varepsilon)}\rangle}
  {\langle Tb_i^{(\varepsilon)}, b_i^{(\varepsilon)}\rangle}
  + \frac{\big\langle T \sum_{j : j > i} a_j b_j^{(\varepsilon)}, b_i^{(\varepsilon)}\big\rangle}
  {\langle Tb_i^{(\varepsilon)}, b_i^{(\varepsilon)}\rangle}
  \Big)b_i^{(\varepsilon)}.
\end{equation}
Noting that $|a_j| \leq \|g\|_{SL^\infty}$ and using the estimates~\eqref{eq:induction-properties}
yields
\begin{equation}\label{eq:crucial-inequality}
  \|UTg - g\|_{SL^\infty} \leq \frac{4\eta'}{\delta} \|g\|_{SL^\infty}.
\end{equation}
Finally, let $J : Y\to SL^\infty$ denote the operator given by $Jy = y$.
By our choice~\eqref{eq:proof:factor:eta'} the operator
$V : SL^\infty\to Y$ given by $V=(UTJ)^{-1}U$ is well defined and
\begin{equation}\label{eq:commutative-diagram:image}
  \vcxymatrix{
    Y \ar[rr]^\Id \ar[dd]_J \ar[rd]_{UTJ} & & Y\\
    & Y \ar[ru]^{(UTJ)^{-1}} &\\
    SL^\infty \ar[rr]_T & & SL^\infty \ar[lu]_U \ar[uu]_V
  }
  \qquad \|J\|\|V\| \leq (1+\eta)/\delta.
\end{equation}
Merging the commutative diagrams~\eqref{eq:commutative-diagram:preimage}
and~\eqref{eq:commutative-diagram:image} concludes the proof.\hfill\qedsymbol

\begin{rem}
  We remark that in the identity~\eqref{eq:crucial-identity} above, the non-separability of
  $SL^\infty$ prevents us from expanding
  $\langle T \sum_{j : j > i} a_j b_j^{(\varepsilon)}, b_i^{(\varepsilon)}\rangle$
  into $\sum_{j : j > i} a_j \langle T b_j^{(\varepsilon)}, b_i^{(\varepsilon)}\rangle$.
  Therefore, in passing from~\eqref{eq:crucial-identity} to~\eqref{eq:crucial-inequality},
  we have to estimate the infinite sum
  $\langle T \sum_{j : j > i} a_j b_j^{(\varepsilon)}, b_i^{(\varepsilon)}\rangle$ \emph{directly}.
  This is achieved by Lemma~\ref{lem:sieve}, which results in
  estimate~\eqref{eq:induction-properties:e}.
\end{rem}

\subsection{\bm{$SL^\infty$} is primary -- Proof of Theorem~\ref{thm:primary}}\hfill\\
\noindent
Here we prove the second main result Theorem~\ref{thm:primary} and show that $SL^\infty$ is primary.
Since the following proof has been given in numerous situations, see e.g.~\cite{mueller:2005}
we will only describe its major steps.
\begin{itemize}
\item Diagonalization of $T$ by Theorem~\ref{thm:quasi-diag} with parameter $\delta = 0$ yields a block
  basis $(b_i : i\in \mathbb N_0)$ such that
  \begin{align*}
    \sum_{j=0}^{i-1} |\langle T b_i, b_j \rangle|
    & \leq \eta 4^{-i} \|b_i\|_2^2,\\
    \big|\big\langle T \sum_{j : j > i} a_j b_j, b_i\big\rangle\big|
    & \leq \eta 4^{-i} \|b_i\|_2^2\, \big\|\sum_{j : j > i} a_j b_j\big\|_{SL^\infty}.
  \end{align*}

\item Finding a ``large'' subcollection of dyadic intervals in one of the following collections:
  \begin{equation*}
    \{ I\in \dint : \langle T b_I, b_I\rangle \geq \|b_I\|_2^2/2\}
    \quad\text{or}\quad
    \{ I\in \dint : \langle (\Id - T) b_I, b_I\rangle \geq \|b_I\|_2^2/2\}.
  \end{equation*}
  It is well established how to construct a sequence of collections
  $(\mathscr C_I : I\in \dint)$ either entirely inside the first collection, or entirely inside the
  second collection, such that Jones' compatibility
  conditions~\textref[J]{enu:j1}--\textref[J]{enu:j4} are satisfied.
  We refer the reader to~\cite{gamlen:gaudet:1973}.
  See also~\cite{mueller:2005}.

\item Using the reiteration Theorem~\ref{thm:projection-iteration} and the projection
  Theorem~\ref{thm:projection} with parameter $\delta = 1/2$, we obtain a block basis
  $(c_I^{(\varepsilon)} : I\in \dint)$ of the Haar system given by
  \begin{equation*}
    c_I^{(\varepsilon)}
    = \sum_{K\in \mathscr C_I} b_K^{(\varepsilon)}
    = \sum_{K\in \mathscr C_I} \sum_{Q\in \mathscr B_K} \varepsilon_K h_K,
  \end{equation*}
  so that $(c_I^{(\varepsilon)} : I\in \dint)$ is $1$-equivalent to $(h_I : I\in \dint)$, and the
  subspace $Y$ of $SL^\infty$ defined by
  \begin{equation*}
    Y = \Big\{g = \sum_{I\in \dint} a_I c_i^{(\varepsilon)} :
    a_i\in \mathbb R, \|g\|_{SL^\infty} < \infty\Big\}.
  \end{equation*}
  is complemented in $SL^\infty$.
  The projection onto $Y$ can be chosen with norm $\leq 2+\eta$.

\item The rest of the proof is repeating the argument in the proof of Theorem~\ref{thm:projection}
  (with $c_I^{(\varepsilon)}$ taking the place of $b_I^{(\varepsilon)}$) to obtain that
  \begin{equation*}
    \vcxymatrix{SL^\infty \ar[r]^\Id \ar[d]_R & SL^\infty\\
      SL^\infty \ar[r]_T & SL^\infty \ar[u]_S}
    \qquad \|R\| \|S\| \leq 2 + \eta.
  \end{equation*}

\item Finally, consider the collections $\mathscr A_k\subset \dint$ given by
  \begin{equation*}
    \mathscr A_k
    = \{ I\in \dint : I\subset [1-2^{-k-1},1-2^{-k})\},
    \qquad k\in \mathbb N,
  \end{equation*}
  and note that with the obvious isomorphism we obtain that
  \begin{equation*}
    SL^\infty = \big( \sum SL^\infty \big)_\infty.
  \end{equation*}
  Thus, Pe{\l}czy{\'n}ski's decomposition method and the above factorization diagram imply the
  primarity of $SL^\infty$.\hfill\qedsymbol
\end{itemize}

\subsection*{Acknowledgments}\hfill\\
\noindent
It is my pleasure to thank N.J.~Laustsen and P.F.X.~Müller for many helpful discussions.
Supported by the Austrian Science Foundation (FWF) Pr.Nr. P28352.

\bibliographystyle{abbrv}
\bibliography{bibliography}

\begin{thebibliography}{10}

\bibitem{alspach:enflo:odell:1977}
D.~Alspach, P.~Enflo, and E.~Odell.
\newblock On the structure of separable {${\mathcal L}_{p}$} spaces
  {$(1<p<\infty )$}.
\newblock {\em Studia Math.}, 60(1):79--90, 1977.

\bibitem{arias:farmer:1996}
A.~Arias and J.~D. Farmer.
\newblock On the structure of tensor products of {$l_p$}-spaces.
\newblock {\em Pacific J. Math.}, 175(1):13--37, 1996.

\bibitem{blower:1990}
G.~Blower.
\newblock The {B}anach space {$B(l^2)$} is primary.
\newblock {\em Bull. London Math. Soc.}, 22(2):176--182, 1990.

\bibitem{bourgain:1983}
J.~Bourgain.
\newblock On the primarity of {$H^{\infty }$}-spaces.
\newblock {\em Israel J. Math.}, 45(4):329--336, 1983.

\bibitem{capon:1980:1}
M.~Capon.
\newblock Primarit\'e de {$l_{p}(L^{1})$}.
\newblock {\em Math. Ann.}, 250(1):55--63, 1980.

\bibitem{capon:1980:2}
M.~Capon.
\newblock Primarit\'e de {$L^{p}(l_{r})$}, {$1<p$},\thinspace {$r<\infty $}.
\newblock {\em Israel J. Math.}, 36(3-4):346--364, 1980.

\bibitem{capon:1982}
M.~Capon.
\newblock Primarit\'e de {$L^{p}(L^{r})$}, {$1<p,\,r<\infty $}.
\newblock {\em Israel J. Math.}, 42(1-2):87--98, 1982.

\bibitem{capon:1983}
M.~Capon.
\newblock Primarit\'e de {$L^{p}(X)$}.
\newblock {\em Trans. Amer. Math. Soc.}, 276(2):431--487, 1983.

\bibitem{casazza:lin:1977}
P.~G. Casazza, C.~A. Kottman, and B.~L. Lin.
\newblock On some classes of primary {B}anach spaces.
\newblock {\em Canad. J. Math.}, 29(4):856--873, 1977.

\bibitem{casazza:lin:1974}
P.~G. Casazza and B.~L. Lin.
\newblock Projections on {B}anach spaces with symmetric bases.
\newblock {\em Studia Math.}, 52:189--193, 1974.

\bibitem{enflo:starbird:1979}
P.~Enflo and T.~W. Starbird.
\newblock Subspaces of {$L^{1}$} containing {$L^{1}$}.
\newblock {\em Studia Math.}, 65(2):203--225, 1979.

\bibitem{gamlen:gaudet:1973}
J.~L.~B. Gamlen and R.~J. Gaudet.
\newblock On subsequences of the {H}aar system in {$L_{p}$} {$[1,\,1](1\leq
  p\leq \infty )$}.
\newblock {\em Israel J. Math.}, 15:404--413, 1973.

\bibitem{garsia:1973}
A.~M. Garsia.
\newblock {\em Martingale inequalities: {S}eminar notes on recent progress}.
\newblock W. A. Benjamin, Inc., Reading, Mass.-London-Amsterdam, 1973.
\newblock Mathematics Lecture Notes Series.

\bibitem{jmst:1979}
W.~B. Johnson, B.~Maurey, G.~Schechtman, and L.~Tzafriri.
\newblock Symmetric structures in {B}anach spaces.
\newblock {\em Mem. Amer. Math. Soc.}, 19(217):v+298, 1979.

\bibitem{jones:1985}
P.~W. Jones.
\newblock B{MO} and the {B}anach space approximation problem.
\newblock {\em Amer. J. Math.}, 107(4):853--893, 1985.

\bibitem{laustsen:lechner:mueller:2015}
N.~J. {Laustsen}, R.~{Lechner}, and P.~F.~X. {M{\"u}ller}.
\newblock {Factorization of the identity through operators with large
  diagonal}.
\newblock unpublished.

\bibitem{lechner:2016-factor}
R.~{Lechner}.
\newblock {Factorization in mixed norm Hardy and BMO spaces}.
\newblock {\em ArXiv e-prints}, Oct. 2016.

\bibitem{lechner:mueller:2014}
R.~Lechner and P.~F.~X. M{\"u}ller.
\newblock Localization and projections on bi-parameter {BMO}.
\newblock {\em Q. J. Math.}, 66(4):1069--1101, 2015.

\bibitem{lindenstrauss:1967}
J.~Lindenstrauss.
\newblock On complemented subspaces of {$m$}.
\newblock {\em Israel J. Math.}, 5:153--156, 1967.

\bibitem{lindenstrauss-tzafriri:1977}
J.~Lindenstrauss and L.~Tzafriri.
\newblock {\em Classical {B}anach spaces. {I}}.
\newblock Springer-Verlag, Berlin-New York, 1977.
\newblock Sequence spaces, Ergebnisse der Mathematik und ihrer Grenzgebiete,
  Vol. 92.

\bibitem{maurey:sous:1975}
B.~Maurey.
\newblock Sous-espaces compl\'ement\'es de {$L^{p}$}, d'apr\`es {P}. {E}nflo.
\newblock In {\em S\'eminaire {M}aurey-{S}chwartz 1974--1975: {E}spaces
  {$L^{p}$}, applications radonifiantes et g\'eom\'etrie des espaces de
  {B}anach, {E}xp. {N}o. {III}}, pages 15 pp. (erratum, p. 1). Centre Math.,
  \'Ecole Polytech., Paris, 1975.

\bibitem{mueller:1988}
P.~F.~X. M{\"u}ller.
\newblock On projections in {$H^1$} and {BMO}.
\newblock {\em Studia Math.}, 89(2):145--158, 1988.

\bibitem{mueller:1994}
P.~F.~X. M{\"u}ller.
\newblock Orthogonal projections on martingale {$H^1$} spaces of two
  parameters.
\newblock {\em Illinois J. Math.}, 38(4):554--573, 1994.

\bibitem{mueller:2005}
P.~F.~X. M{\"u}ller.
\newblock {\em Isomorphisms between {$H\sp 1$} spaces}, volume~66 of {\em
  Instytut Matematyczny Polskiej Akademii Nauk. Monografie Matematyczne (New
  Series) [Mathematics Institute of the Polish Academy of Sciences.
  Mathematical Monographs (New Series)]}.
\newblock Birkh\"auser Verlag, Basel, 2005.

\bibitem{mueller:2012}
P.~F.~X. M{\"u}ller.
\newblock Two remarks on primary spaces.
\newblock {\em Math. Proc. Cambridge Philos. Soc.}, 153(3):505--523, 2012.

\bibitem{wark:2007:class}
H.~M. Wark.
\newblock A class of primary {B}anach spaces.
\newblock {\em J. Math. Anal. Appl.}, 326(2):1427--1436, 2007.

\bibitem{wark:2007:direct-sum}
H.~M. Wark.
\newblock The {$l^\infty$} direct sum of {$L^p$} {$(1<p<\infty)$} is primary.
\newblock {\em J. Lond. Math. Soc. (2)}, 75(1):176--186, 2007.

\end{thebibliography}

\end{document}